\numberwithin{equation}{section}
\newtheorem{theorem}{Theorem}[section]
\newtheorem{lemma}[theorem]{Lemma}
\newtheorem{lem}[theorem]{Lemma}
\newtheorem{proposition}[theorem]{Proposition}
\def\eps{\varepsilon }
\newcommand{\RR}{\mathbb{R}}
\newcommand{\C}{\mathbb{C}}
\newcommand{\NN}{{\mathbb N}}
\newcommand{\ZZ}{{\mathbb Z}}
\newcommand{\TT}{{\mathbb T}}
\def\beq{\begin{equation}}
\def\eeq{\end{equation}}
\def\bb1{{1\!\!1}}
\def\cL{\mathcal{L}}
\def\eps{\varepsilon}
\def\Ff{\widehat{f}}
\def\Frho{\widehat{\rho}}
\def\FE{\widehat{E}}
\def\FS{\widehat{S}}
\def\FN{\widehat{N}}
\def\Fmu{\widehat{\mu}}
\def\TG{\widetilde{G}}
\begin{document}

\title{Plasma echoes near stable Penrose data} 

\author{Emmanuel Grenier\footnotemark[1]
  \and Toan T. Nguyen\footnotemark[2]
\and Igor Rodnianski\footnotemark[3]
}

\maketitle
\renewcommand{\thefootnote}{\fnsymbol{footnote}}

\footnotetext[1]{CNRS et \'Ecole Normale Sup\'erieure de Lyon, Equipe Projet Inria NUMED,
 INRIA Rh\^one Alpes, Unit\'e de Math\'ematiques Pures et Appliqu\'ees., 
 UMR 5669, 46, all\'ee d'Italie, 69364 Lyon Cedex 07, France. Email: Emmanuel.Grenier@ens-lyon.fr}

\footnotetext[2]{Penn State University, Department of Mathematics, State College, PA 16803. Email: nguyen@math.psu.edu. TN is a Visiting Fellow at Department of Mathematics, Princeton University, and partly supported by the NSF under grant DMS-1764119, an AMS Centennial fellowship, and a Simons fellowship.}

\footnotetext[3]{Princeton University, Department of Mathematics, Fine Hall, Washington Road, Princeton, NJ 08544. Email: irod@math.princeton.edu. IR is partially supported by the NSF 
grant DMS \#1709270 and a Simons Investigator Award. }

\subsubsection*{Abstract}

In this paper we construct particular solutions to the classical Vlasov-Poisson system near stable Penrose initial data on $\TT \times \RR$ that are a combination of elementary waves with arbitrarily high frequencies. These waves mutually interact giving birth, eventually, to an infinite cascade of echoes of smaller and smaller amplitude. The echo solutions do not belong to the analytic or Gevrey classes studied by Mouhot and Villani, but do, nonetheless, exhibit damping phenomena for large times.  



\section{Introduction}\label{sec-echosolution}


In the physical literature, the large time behavior of a plasma modeled by the classical Vlasov-Poisson system is characterized by 

\begin{itemize}

{\em \item Landau damping:} decay of the electric field for large times. 

{\em \item Plasma echoes.} An elementary wave, arising as a {\it free transport} of initial data of the form $\eps f_1(v) e^{i k_1 x + i \eta_1 v}$, will generate an electric field of order $\epsilon$ that is localized near the critical time $\tau_1 = \eta_1/k_1$ and decays\footnote{polynomially or exponentially small, depending on the regularity of $f_1(v)$.} for larger times. When two elementary waves $\eps f_j(v) e^{i k_j x + i \eta_j v}$, with arbitrarily large frequencies $k_j, \eta_j$, $j = 1,2$ and the associated critical times $\tau_j = \eta_j / k_j$ interact, a third wave of the same form is created. The electric field of this third wave is localized, but of order $\epsilon^2$, near the ``echo time'' $\tau = (\eta_1 + \eta_2) / (k_1 + k_2 )$, which could be long after the first two waves have died away. The phenomenon is often referred to as an ``echo" \cite{Gould}.

\end{itemize}

From the discussion above,  an echo is of a higher order ($\eps^2$) in amplitude. The created wave again interacts with the other two waves,
creating higher order waves, and higher order echoes, and so on. That is, starting from two waves, an infinite number
of waves, of smaller and smaller amplitudes, appear, with an infinite number of echoes, of smaller and smaller
amplitudes.

In this context, the fundamental question is to understand the described heuristic picture and analyze  large time {\it nonlinear} behavior
of  such an infinite cascade. While the linear Landau damping was discovered and fully understood by Landau \cite{Landau}, 
the nonlinear analogue has been largely elusive. However, important progress has been made by Mouhot and Villani in their celebrated work \cite{MV}, where the problem was solved in the case of analytic or Gevrey data. 
Their proof has then been simplified in \cite{BMM}. The damping for data with finite Sobolev regularity remains largely open due to plasma echoes \cite{Gould} and high frequency instabilities \cite{Bedrossian}, while it is known to be false for data with very low regularity \cite{Lin}.  

In the companion paper \cite{GNR1}, we give an elementary proof of the nonlinear Landau damping for analytic and Gevrey data \cite{MV,BMM}. In this paper, we construct an exact solution of the classical Vlasov-Poisson system, starting form an infinite number of elementary waves
of amplitude $\eps$. Provided that $\epsilon$ is sufficiently small and each wave has an analytic regularity, we are able to track all the interactions and to construct solutions which display an infinite number of
echoes, which are of a smaller and smaller amplitude as time evolves. The associated electric field decays for large times, and therefore {\em Landau damping} holds for such data. As we are allowed to take the frequencies of each elementary wave to be arbitrarily large, the solutions do not belong to the class of analytic or Gevrey solutions constructed in \cite{MV,BMM}.

Precisely, we consider the following classical Vlasov-Poisson system\footnote{obtained from the standard Vlasov-Poisson equations
$$
\partial_t \tilde f + v\partial_x \tilde f +E \partial_v \tilde f  =0, \qquad \partial_x E = \rho = \int_{\RR} \tilde f\; dv-1
$$
by writing $\tilde f=f+\mu$} 
\begin{equation}\label{VP-pert} 
\partial_t f + v\partial_x f +E \partial_v f + E \partial_v \mu =0, \qquad \partial_x E = \rho = \int_{\RR} f\; dv
\end{equation}
on the torus $\TT \times \RR$, for small initial data $f^0(x,v)$, where $\mu(v)$ is a stable Penrose equilibrium. We require that $\mu(v)$ is such that: 
\begin{itemize}

\item $\mu(v)$ is real analytic and satisfies 
\begin{equation}
\label{reg-mu}
|\widehat{\langle v\rangle^2\mu}(\eta)| + |\widehat{\mu}(\eta)|  \le C_0 e^{-\theta_0 |\eta|}
\end{equation} 

\item $\mu(v)$ satisfies the Penrose stability condition: namely, 
\begin{equation}\label{Penrose0}
\inf_{k\in \ZZ^d  \setminus\{0\}; \Re \lambda \ge 0} \Big |1 + \int_0^\infty e^{-\lambda t} t \widehat{\mu}(kt) \; dt \Big| \ge \kappa_0 >0.
\end{equation}
The condition is to ensure that the linearized system of \eqref{VP-pert} (obtained by dropping the nonlinear term $E \partial_v f$) is solvable. It holds for a variety of equilibria including the Gaussian $\mu(v) = e^{-|v|^2/2}$. 
In three or higher dimensions, the condition is valid for any positive and radially symmetric equilibria \cite{MV}.

\end{itemize}

We consider the initial data which are a sum of highly oscillatory simple modes of the form
\beq \label{echoinit}
f^0(x,v) = \sum_{(k,\eta) \in \ZZ\setminus\{0\} \times \ZZ}  \eps f^0_{k,\eta}(v) e^{i K kx + i L\eta  v} ,
\eeq
for large parameters $K$ and $L$ and for small $\epsilon$. We stress that $f^0$ is rapidly oscillating in $x$ and $v$. In particular, of $K,L$ are allowed to be arbitrarily large,
the initial data is of order $\epsilon \langle K,L\rangle^s$ in Sobolev spaces $W^{s,\infty}(\TT \times \RR)$,
 which is also arbitrarily large, for any $s>0$. 

Our main result asserts that Landau damping holds for data of the form \eqref{echoinit}. Precisely, we prove the following.

\begin{theorem}\label{theo-echoes} Let $\lambda_0, C_0>0$, and $K,L$ be arbitrarily large so that 
$$ L \le C_0 K. $$
Assume that \eqref{reg-mu}-\eqref{Penrose0} hold. Then, for sufficiently small $\epsilon$ independent of $K,L$ 
and for any initial data of the form \eqref{echoinit} with the analytic coefficients $f^0_{k,\eta}(v)$ satisfying\footnote{We use the notation $\langle x_1,..,x_n\rangle=\sqrt{1+x_1^2+...+x_n^2}$. Depending on the context, we also use $~\widehat{} ~$ to denote the Fourier transform in $x$, $v$ or both.}
\begin{equation}\label{assump-p1}
|\Ff^0_{k,\eta}(\eta')|\le e^{-2\lambda_0 \langle k, \eta,\eta'\rangle},
\end{equation}
uniformly in $k,\eta,\eta'$,
 there exists a unique global solution to the Vlasov-Poisson system \eqref{VP-pert}. In addition, the solution can be written in the form 
\beq \label{formal1}
f(t,x,v) = \sum_{(k,\eta, p) \in \ZZ \times \ZZ \times \NN^\star} \eps^p f_{k,\eta,p}(t,v) e^{i K kx + i (L\eta - Kkt) v} ,
\eeq
where the coefficients $f_{k,\eta,p}(t,v)$ are analytic in $v$ and satisfy 
\begin{equation}\label{main-bdF}|\Ff_{k,\eta,p} (t,\eta')| \le C_1^p  e^{- \lambda_0\langle k,\eta,p, \eta' \rangle} , \quad ~t\ge 0,
\end{equation}
uniformly in $k,\eta,p,\eta'$, for some universal constant $C_1$ that is independent of $K,L$, and $\epsilon$. In particular, the associated electric field 
$$E(t,x) = \sum_{(k,\eta, p) \in \ZZ \times \ZZ \times \NN^\star} \eps^p \Ff_{k,\eta,p}(t, Kkt-L\eta ) \frac{e^{i K kx }}{iK k} \longrightarrow 0$$
exponentially fast in any Sobolev spaces $W^{s,q}$, $s\ge 0$ and $q\ge 1$, as $t\to \infty$. 
\end{theorem}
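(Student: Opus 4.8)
\emph{Proof sketch.} The plan is to construct the solution as a power series in $\eps$ in which the ansatz \eqref{formal1} prescribes, term by term, the dependence on $x$ and $v$. The point of \eqref{formal1} is that $e^{iKkx+i(L\eta-Kkt)v}$ is exactly the free-transport phase, so that $(\partial_t+v\partial_x)$ applied to such a term reduces to $\partial_t$ applied to the coefficient $f_{k,\eta,p}(t,v)$. Substituting \eqref{formal1} into \eqref{VP-pert} and collecting powers of $\eps$, the coefficient $f_{k,\eta,p}$ obeys a linearized Vlasov equation forced by the interaction terms $E^{(q)}\partial_v f^{(p-q)}$, $1\le q\le p-1$, which send $(k_1,\eta_1,q)$ and $(k_2,\eta_2,p-q)$ into $(k_1+k_2,\eta_1+\eta_2,p)$; the $\eta$-decomposition of the forcing $E^{(p)}\partial_v\mu$ is chosen so that each $(k,\eta,p)$-subsystem closes (the sum over $\eta$ still reproducing the true $\rho^{(p)}$, which is all one needs since \eqref{VP-pert} is linear in $f$ for fixed $E$). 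Taking the Fourier transform in $v$ and evaluating at $\xi=Kkt-L\eta$ then closes, for each $(k,\eta,p)$ with $k\neq0$, a scalar Volterra equation for $\rho_{k,\eta,p}(t):=\hat f_{k,\eta,p}(t,Kkt-L\eta)$ with convolution kernel $-(t-s)\hat\mu\big(Kk(t-s)\big)$ and a source built from $(f_{k',\eta',p'})_{p'<p}$ (and, only for $p=1$, from the datum $\Ff^0_{k,\eta}$); for $k=0$ there is no self-interaction and one simply integrates. Once $\rho_{k,\eta,p}$ is known, $\hat f_{k,\eta,p}(t,\xi)$ is recovered for all $\xi$ by one more time integration. (Conservation of mass forces the $k=0$ mode of $\rho$ to vanish, so $E$ is well defined.)

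The Volterra equation is solved uniformly in $K,L$ using the Penrose condition \eqref{Penrose0} — the Laplace transform of the kernel is a rescaling of the Penrose functional — together with \eqref{reg-mu}, which lets one push the contour slightly into $\{\Re\lambda<0\}$, so that the resolvent is bounded in norms carrying the time weight $e^{\lambda_0|Kkt-L\eta|}$ as long as $\lambda_0<\theta_0$; for $p=1$ this is exactly the linear Landau damping of \cite{GNR1,MV,BMM}. One then runs an induction on $p$, propagating \eqref{main-bdF} in the equivalent form $\|\rho_{k,\eta,p}\|\le C_1^p e^{-\lambda_0\langle k,\eta,p\rangle}$ (weighted norm) and $|\hat f_{k,\eta,p}(t,\xi)|\le C_1^p e^{-\lambda_0\langle k,\eta,p,\xi\rangle}$. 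The heart of the induction is the echo estimate: the contribution to the source of $\rho_{k,\eta,p}$ from the interaction of $(k_1,\eta_1,q)$ with $(k_2,\eta_2,p-q)$ is $-\tfrac{k}{k_1}\int_0^t(t-s)\,\rho_{k_1,\eta_1,q}(s)\,\hat f_{k_2,\eta_2,p-q}\big(s,\,Kkt-L\eta_2-Kk_1 s\big)\,ds$; the factor $\rho_{k_1,\eta_1,q}(s)$ is exponentially localized in $s$ near the critical time $s_\star=L\eta_1/(Kk_1)$ on a window of width $\sim1/(K|k_1|)$, while the $\hat f$-factor localizes $t$ near the echo time $\tau_{k,\eta}=L\eta/(Kk)$, so after the $s$-integration the amplitude acquires the prefactor $\sim\tfrac{k}{k_1}(\tau_{k,\eta}-s_\star)\tfrac1{K|k_1|}=\tfrac{L(\eta k_1-\eta_1 k)}{K^2 k_1^3}$. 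This is $\le C_0$ times a polynomial in $(k,\eta,k_1,\eta_1)$ — this is exactly where the hypothesis $L\le C_0K$ is used — and the polynomial is absorbed by the exponential decay carried by $\rho_{k_1,\eta_1,q}$ and $\hat f_{k_2,\eta_2,p-q}$, the sum over $(k_1,\eta_1)$ being performed in an $\ell^1$-in-$(k,\eta)$ (hence algebra) norm so that no analyticity radius is lost at this step.

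The remaining sum over $q$ is a Catalan-type convolution $a_p\lesssim\sum_{q=1}^{p-1}a_q a_{p-q}$, closed by a Cauchy-majorant argument with a $p^{-2}$ weight; this yields $\|f_{k,\eta,p}\|\le C_1^p e^{-\lambda_0\langle k,\eta,p\rangle}$ with $C_1$ depending only on $C_0,\lambda_0,\theta_0,\kappa_0$, and not on $K,L,\eps$. For $\eps$ small (depending only on $C_1$) the series \eqref{formal1} converges absolutely in an analytic norm, solves \eqref{VP-pert}, and is the unique such solution by the same contraction estimate. For the decay of $E$, every term with $k\neq0$ satisfies $\langle k,\eta,p\rangle+|Kkt-L\eta|\ge|\eta|+|Kkt-L\eta|\ge Kt/L\ge t/C_0$, so that summing \eqref{main-bdF} evaluated at $\xi=Kkt-L\eta$ gives $\|E(t)\|_{W^{s,q}}\le C(K,L,s,q)\,e^{-c t}$ with $c\sim\lambda_0/C_0$.

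The main obstacle is the echo estimate of the previous two paragraphs: one must show that each elementary echo contributes a gain bounded by a constant independent of the arbitrarily large parameters $K,L$. This forces the use of $L\le C_0K$ in an essential way, and requires setting up the function spaces — $\ell^1$-type Fourier norms in $(k,\eta)$ together with the moving time weight $e^{\lambda_0|Kkt-L\eta|}$ adapted to the critical times — so that the frequency convolutions and the reconstruction of $\hat f_{k,\eta,p}(t,\cdot)$ from $\rho_{k,\eta,p}$ cost neither the analyticity radius (beyond the single, total loss built into the gap between the $2\lambda_0$ of \eqref{assump-p1} and the $\lambda_0$ of \eqref{main-bdF}) nor an $\eps$-independent factor per generation.
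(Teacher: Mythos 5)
Your proposal follows essentially the same route as the paper: the same power-series ansatz \eqref{formal1} with free-transport phases, the same mode-by-mode reduction to a Volterra equation for $\Frho_{k,\eta,p}(t)=\Ff_{k,\eta,p}(t,Kkt-L\eta)$ solved via the Penrose resolvent $G_k$, and the same induction on $p$; your explicit source term $\tfrac{k}{k_1}\int_0^t(t-s)\rho_{k_1,\eta_1,q}(s)\widehat f_{k_2,\eta_2,p-q}(s,Kkt-L\eta_2-Kk_1s)\,ds$ and your use of $L\le C_0K$ to get time decay from $|Kkt-L\eta|+|\eta|\ge t/C_0$ both match the paper exactly. The one genuine divergence is in how the induction is closed: you keep a fixed analyticity radius $\lambda_0$, argue the echo gain heuristically via the width $\sim 1/(K|k_1|)$ of the resonance window, and sum over $p$ with a Cauchy-majorant/$p^{-2}$ weight, whereas the paper replaces all of this by the shrinking weight $\lambda_p(t)=\lambda_0+\langle t\rangle^{-\delta}+p^{-\delta}$, strictly decreasing in both $p$ and $t$; the monotonicity gaps $p_i^{-\delta}-p^{-\delta}$ and $s^{-\delta}-t^{-\delta}$ then automatically generate Gevrey-type factors $e^{-\theta\langle k_i,\eta_i,p_i\rangle^{1-\delta}}$ and $e^{-\theta\langle kt-k_1s\rangle^{1-\delta}}$ that simultaneously make the sum over $A_{k,\eta,p}$ converge, absorb the $(t-s)$ growth from $\partial_v$, and propagate the $\langle t\rangle^{-\sigma}$ decay of $\rho$ --- i.e.\ they are the rigorous implementation of your ``localization window'' heuristic, with a case analysis on $k_1=k$ versus $k_1>k$ where your sketch stays informal. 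Your version would still need to verify that the fixed-radius norms close without a per-generation loss (the paper sidesteps this by spending an infinitesimal amount of radius at each $p$), but the architecture is the same and the approach is sound.
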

We should note that the results easily generalize to higher dimensions. Condition $L\le C_0 K$, coupled with the assumptions on the $(k,\eta)$ dependence of the initial data, imply that the ``echos" occur at times which are, essentially, 
uniformly bounded.


\section{Linear theory}


In this section, we recall the linear Landau damping theory developed in the companion paper \cite{GNR1}. Precisely, let $k\in \ZZ$ and let $\Frho(t)$ satisfy 
\begin{equation}
\label{eqs-rho0} \Frho(t) + \int_0^t (t-s) \widehat{\mu}(k(t-s)) \Frho(s) \; ds = \FS(t)
\end{equation}
with a source term $\FS(t)$. Taking the Laplace transform of \eqref{eqs-rho0} in time, we  get 
\begin{equation}\label{Lap-rho} 
\cL[\Frho(t)](\lambda) = \frac{\cL[\FS(t)](\lambda)}{1 + \cL[t \widehat{\mu}(kt)](\lambda)} 
\end{equation}
where $\cL[F(t)](\lambda)$ denotes the usual Laplace transform of $F(t)$. The Penrose stability condition \eqref{Penrose0} ensures that the symbol $1 + \cL[t \widehat{\mu}(kt)](\lambda)$ never vanishes. 

We then have the following. 

\begin{proposition}\label{prop-GNR} Assume that \eqref{reg-mu}-\eqref{Penrose0} hold. Then, the solution $\Frho(t)$ to \eqref{eqs-rho0} exists and satisfies 
\begin{equation}\label{intform-rho}
\Frho(t)  = \FS(t) + \int_0^t G_k(t-s) \FS(s)\; ds
\end{equation}
where $|G_k(t)| \le C_1 e^{-\theta_1 |kt|}$ for some positive constants $\theta_1,C_1$. 

\end{proposition}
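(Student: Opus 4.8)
The plan is to read \eqref{eqs-rho0} as a scalar Volterra convolution equation $\Frho+K_k\ast\Frho=\FS$ on $\RR^+$, with kernel $K_k(t):=t\,\widehat\mu(kt)$, and to construct its resolvent kernel $G_k$, that is, the solution of $G_k+K_k\ast G_k=-K_k$. Granting such a $G_k$, convolving the equation with it and using commutativity of convolution produces $\Frho=\FS+G_k\ast\FS$, which is \eqref{intform-rho}, together with existence and uniqueness of $\Frho$. I would build $G_k$ by the Neumann series $G_k=\sum_{n\ge1}(-1)^n K_k^{\ast n}$: the bound $|K_k(t)|\le C_0\,t\,e^{-\theta_0|k|t}$ from \eqref{reg-mu} gives $\|K_k^{\ast n}\|_{L^\infty([0,T])}\le\|K_k\|_{L^\infty([0,T])}^{\,n}\,T^{n-1}/(n-1)!$, so the series converges locally uniformly and $G_k$ is continuous on $\RR^+$ with at most exponential growth. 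This settles solvability; the real content of the Proposition is the exponential decay of $G_k$.

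For the decay I would pass to the Laplace side, where the resolvent identity reads $\cL[G_k](\lambda)=-m_k(\lambda)/(1+m_k(\lambda))$ with $m_k(\lambda):=\cL[K_k](\lambda)$, cf.\ \eqref{Lap-rho}. From \eqref{reg-mu} --- together with the identity $\widehat{\langle v\rangle^2\mu}=\widehat\mu-\partial_\eta^2\widehat\mu$, which forces $\widehat\mu''$, hence also $\widehat\mu'$, to decay like $e^{-\theta_0|\eta|}$ --- the function $m_k$ continues holomorphically to $\{\Re\lambda>-\theta_0|k|\}$, and two integrations by parts (using $K_k(0)=0$) give $|m_k(\lambda)|\le C(k)\,|\lambda|^{-2}$ there, so in particular $m_k(\lambda)\to0$ along vertical lines. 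The decisive step is a \emph{uniform} lower bound $|1+m_k(\lambda)|\ge\kappa_0/2$, valid for all $k\neq0$ and all $\Re\lambda\ge-\theta_1$, with $\theta_1>0$ depending only on $\kappa_0,\theta_0,C_0$. I would prove it by comparing $m_k(\lambda)$ with its value at $i\,\Im\lambda$: for $-\theta_1\le\Re\lambda\le0$,
\[
|1+m_k(\lambda)|\ \ge\ |1+m_k(i\,\Im\lambda)|-\int_0^\infty|e^{-\Re\lambda\,t}-1|\,|K_k(t)|\,dt ,
\]
where the first term on the right is $\ge\kappa_0$ by \eqref{Penrose0}; estimating $|e^{-\Re\lambda\,t}-1|\le\theta_1\,t\,e^{\theta_1 t}$ together with $|K_k(t)|\le C_0\,t\,e^{-\theta_0|k|t}$ and $|k|\ge1$ bounds the integral by $2C_0\theta_1/(\theta_0-\theta_1)^3$, which is $\le\kappa_0/2$ for $\theta_1$ small. (Equivalently, one rescales time by $|k|$ from the start, after which $m_k$ depends on $k$ only through a harmless factor $k^{-2}\le1$ and the uniformity in $k$ is automatic.)

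Finally I would invert the Laplace transform and shift the Bromwich contour. On $\{\Re\lambda\ge-\theta_1\}$, $\cL[G_k]$ is holomorphic with $|\cL[G_k](\lambda)|\le\tfrac{2}{\kappa_0}|m_k(\lambda)|=O(|\lambda|^{-2})$, so the contour may be moved from $\Re\lambda=c$ to $\Re\lambda=-\theta_1$ (the horizontal segments vanish as $\Im\lambda\to\pm\infty$ by that decay), yielding
\[
G_k(t)=\frac{e^{-\theta_1 t}}{2\pi}\int_\RR e^{i\tau t}\,\cL[G_k](-\theta_1+i\tau)\,d\tau ,
\]
hence $|G_k(t)|\le C\,e^{-\theta_1 t}$ with $C=\tfrac{1}{2\pi}\|\cL[G_k](-\theta_1+i\cdot)\|_{L^1}$. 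The time rescaling shows this $L^1$ norm is bounded uniformly in $k$, which upgrades the estimate to $|G_k(t)|\le C_1 e^{-\theta_1|kt|}$, as claimed; the case $k=0$ (excluded from \eqref{Penrose0}) is handled directly from $\cL[G_0](\lambda)=-\widehat\mu(0)/(\lambda^2+\widehat\mu(0))$, whose inverse transform is bounded.

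The step I expect to be the main obstacle is the uniform symbol bound in the first display above. The Penrose condition \eqref{Penrose0} only provides non-vanishing of $1+m_k$ on the closed right half-plane; turning this into non-vanishing with a quantitative, frequency-independent gap on a strip $\{\Re\lambda\ge-\theta_1\}$ is exactly where the analyticity of $\mu$ --- the exponential decay in \eqref{reg-mu} --- is indispensable, both to make the holomorphic continuation of $m_k$ possible and to keep the perturbation estimate uniform in the frequency $k$. Once that bound is secured, the contour shift, the $L^1$ estimate, and the uniformity of the final constant $C_1$ are all routine, modulo the bookkeeping of the $k$-dependence which the time rescaling organizes cleanly.
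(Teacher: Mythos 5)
Your proposal follows essentially the same route as the paper: pass to the Laplace transform, continue the symbol $m_k(\lambda)=\cL[t\widehat\mu(kt)](\lambda)$ analytically into a left strip using \eqref{reg-mu}, obtain a lower bound on $1+m_k$ by combining the Penrose condition \eqref{Penrose0} on the imaginary axis with a perturbation off it and with the smallness of $m_k$ for large $|k|$ or large $|\Im\lambda|$, and then shift the Bromwich contour. Your explicit perturbation estimate for the lower bound is in fact more quantitative than the paper's compactness argument, and the Neumann-series construction of the resolvent kernel is a harmless addition. One detail to correct in the final step: shifting the contour only to $\Re\lambda=-\theta_1$ yields $|G_k(t)|\le Ce^{-\theta_1 t}$, not the claimed $e^{-\theta_1|kt|}$; as in the paper, the contour must go to $\Re\lambda=-\theta_1|k|$. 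Your own argument supports this wider shift --- on the strip $\Re\lambda\ge-\theta_1|k|$ the perturbation integral is bounded by $2C_0\theta_1(\theta_0-\theta_1)^{-3}|k|^{-2}$, still small uniformly in $k$, and the rescaling $\lambda\mapsto|k|\lambda$ shows the $L^1$ norm of $\cL[G_k]$ on that line is $O(|k|^{-1})$ --- so the fix is routine, but it should be carried out rather than relegated to the parenthetical remark.
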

\begin{proof} From \eqref{Lap-rho}, we can write 
\begin{equation}
\label{eqs-rholambda}\cL[\Frho](\lambda) = \cL[\FS](\lambda) +  \TG_k(\lambda) \cL[\FS](\lambda)
\end{equation} 
where we denote 
\begin{equation}\label{def-Glambda}
\TG_k(\lambda):= - \frac{\cL[t \widehat{\mu}(kt)](\lambda)}{1 +\cL[t \widehat{\mu}(kt)](\lambda)} .
\end{equation}
The integral formulation \eqref{intform-rho} thus follows, where $G_k(t)$ is the inverse Laplace transform of $\TG_k(\lambda)$.  It remains to prove the estimate on $G_k(t)$. First, we note by definition that 
$$ 
\cL[t \widehat{\mu}(kt)](\lambda) = \int_0^\infty e^{-\lambda t} t \Fmu(k t) \; dt .
$$
Thus, the Penrose condition \eqref{Penrose0} ensures that the denominator $1 +\cL[t \widehat{\mu}(kt)](\lambda)$ never vanishes for $\Re \lambda \ge 0$. Furthermore, using \eqref{reg-mu}, we in fact have 
\begin{equation}\label{bd-Lmu} 
|\cL[t \widehat{\mu}(kt)](\lambda)| \le C_0 \int_0^\infty e^{-\Re \lambda t} t e^{-\theta_0 |kt|} \; dt \le C_1|k|^{-2}
\end{equation}
for $\Re \lambda \ge - \theta_1 |k|$ and for any $\theta_1<\theta_0$. On the other hand, for $\Re \lambda = -\theta_1 |k|$, integrating by parts in time, we get  
$$
\begin{aligned}  
\cL[t \widehat{\mu}(kt)](\lambda) &= \int_0^\infty \frac{(M^2_k-\partial_t^2)(e^{-\lambda t})}{M^2_k-\lambda^2} t \Fmu(k t) \; dt 
\\&= \int_0^\infty \frac{e^{-\lambda t}}{M^2_k-\lambda^2} (M^2_k-\partial_t^2)(t \Fmu(k t)) \; dt - \frac{\Fmu (0)}{M^2_k - \lambda^2} 
\end{aligned}$$
for any constant $M_k \not = \lambda$. Taking $M_k = 2\theta_1|k|$, we have 
$$
\begin{aligned}  
| \cL[t \widehat{\mu}(kt)](\lambda)| &\le C_0\int_0^\infty \frac{e^{\frac12 \theta_0 |kt|}}{\theta_1^2 |k|^2+ |\Im \lambda|^2} (|k| + |k|^2 t) 
e^{-\theta_0 |kt |} \; dt + \frac{C_0}{\theta_1^2|k|^2 + |\Im \lambda|^2}
\end{aligned}$$
which gives \begin{equation} \label{bd-Lmu1}
\begin{aligned}  
| \cL[t \widehat{\mu}(kt)](\lambda)| 
\le C_1 (1 + |k|^2 + |\Im \lambda |^2)^{-1}. 
\end{aligned}\end{equation}
for any $\lambda$ on the line $\{\Re \lambda = -\theta_1 |k|\}$. This proves that $\cL[t \widehat{\mu}(kt)](\lambda)$ is analytic in $\{\Re \lambda \ge -\theta_1 |k|\}$, for any $\theta_1<\theta_0$.  

We next prove that there is a positive $\theta_1<\theta_0$ so that $ \TG_k(\lambda)$ is analytic in $\{\Re \lambda \ge -\theta_1 |k|\}$ and the estimate \eqref{bd-Lmu1} also holds for $ \TG_k(\lambda)$, possibly with a different constant $C_1$. Indeed, the estimate \eqref{bd-Lmu}  shows that there are $k_0, \tau_0$ so that $|\cL[t \widehat{\mu}(kt)](\lambda)| \le \frac12$ 
for all $|k|\ge k_0$ and $\lambda \ge -\theta_1 |k|$, or for $\Re \lambda = -\theta_1|k|$ and $|\Im \lambda| \ge \tau_0$. While for $|\Im\lambda|\le \tau_0$ and $|k|\le k_0$, since the Penrose condition \eqref{Penrose0} holds for $\Re \lambda =0$, there is a small positive constant $\theta_1$ so that 
\begin{equation}\label{lowbd-Lmu}
|1 + \cL[t \widehat{\mu}(kt)](\lambda)| \ge \frac12 \kappa_0 
\end{equation}
for $\Re \lambda = -\theta_1|k|$ (recalling that $1\le |k|\le k_0$). Combining, we have that $1 + \cL[t \widehat{\mu}(kt)](\lambda)$ is bounded below away from zero on $\{ \Re \lambda \ge -\theta_1 |k|$ for all $k\in \ZZ$. The estimates on $\TG_k(\lambda)$ thus follows from those on $\cL[t \widehat{\mu}(kt)](\lambda)$. 

By definition, we have 
$$
G_k(t) = \frac{1}{2\pi i} \int_{\{ \Re \lambda = \gamma_0\}} e^{\lambda t} \TG_k(\lambda)\; d\lambda
$$ 
for some large positive constant $\gamma_0$. Since 
$\TG_k(\lambda)$ is analytic in $\{\Re\lambda \ge -\theta_1|k| \}$, 
and thus we can apply the Cauchy's theory to deform the complex contour of integration from 
$\{ \Re \lambda = \gamma_0\}$ into $\{ \Re \lambda = - \theta_1|k|\}$, 
on which both estimates \eqref{bd-Lmu1} and \eqref{lowbd-Lmu} hold. Therefore,  
$$
|G_k(t)| \le C_1 \int_{\{ \Re \lambda = -\theta_1 |k|\}} e^{-\theta_1|kt|} (1 + |k|^2 + |\Im \lambda|^2)^{-1}\; d\lambda 
\le C_1 e^{-\theta_1|kt|}.$$ 
The Proposition follows. 
\end{proof}


\section{Construction}



\subsection{Setup}


Let us first detail the construction of the profiles $f_{k,\eta,p}$. At each step we consider the term $E \partial_v f$
of (\ref{VP-pert}) as a source term for the linear Vlasov Poisson near the equilibrium $\mu$.
For each $(k,\eta, p) \in \ZZ \times \ZZ \times \NN^\star$, 
we thus look for  $f_{k,\eta,p}(t,v)$ and $\FE_{k,\eta,p}(t)$ satisfying 
\begin{equation}
\label{inductive-f}
\partial_t f_{k,\eta,p} + \FE_{k,\eta,p} e^{-i(L\eta - Kkt)v}\partial_v \mu = N_{k,\eta,p}, 
\end{equation}
in which
\begin{itemize}
	
\item For $p=1$, we take $f_{k,\eta,1}(0,v) = f^0_{k,\eta}(v)$ and $N_{k,\eta,1} =0$. 

\item For $p\ge 2$, we take $f_{k,\eta,p}(0,v) =0$ and 
$$
\begin{aligned}
N_{k,\eta,p}(t,v) &= \sum_{(k_1,\eta_1,k_2,\eta_2,p_1,p_2) \in A_{k,\eta,p}} e^{-i(L \eta_1 - Kk_1t)v} \FE_{k_1,\eta_1,p_1}(t) 
\\&\quad \times [\partial_v + i(L \eta_2 - Kk_2 t)] f_{k_2,\eta_2,p_2} (t,v)
\end{aligned}$$
where $A_{k,\eta,p}$  denotes the set of sextets in $\ZZ \times \ZZ \times \NN^\star$:
$$
A_{k,\eta,p}: = \Big \{ 
k_1 + k_2 = k, ~\eta_1 + \eta_2 = \eta, ~p_1 + p_2 = p\Big\}.
$$
\end{itemize}
 The electric field is defined by 
$$
\FE_{k,\eta,0}(t) =0
$$ 
and for $p > 0$, by a direct computation,
\begin{equation}\label{def-Ekp}
 \FE_{k,\eta,p}(t) =  \frac{1}{iKk}\Frho_{k,\eta,p}(t) =  \frac{1}{iKk}\Ff_{k,\eta,p}(t, Kkt-L\eta).
 \end{equation}
By construction, the infinite series \eqref{formal1} formally solves the Vlasov-Poisson system \eqref{VP-pert} 
with corresponding electric field 
\beq \label{formal-E}
E(t,x) = \sum_{(k,\eta, p) \in \ZZ \times \ZZ \times \NN^\star} \eps^p \Ff_{k,\eta,p}(t, Kkt-L\eta ) \frac{e^{i K kx }}{iK k} .
\eeq
Note that taking the Fourier transform of \eqref{formal1} in $x$ and $v$, we have 
\beq \label{formal1-Fourier}
\Ff(t,Kk,\eta') = \sum_{(\eta, p) \in \ZZ \times \NN^\star}  \eps^p \Ff_{k,\eta,p}(t,\eta' - L\eta + Kkt )
\eeq
and $\Ff(t,k',\eta') =0$ for $k' \not \in K \ZZ$. 

It remains to derive estimates on the Fourier transform $\Ff_{k,\eta,p}(t,\eta')$ 
of each functions $f_{k,\eta,p}(t,v)$ in order to ensure the convergence of the infinite series \eqref{formal1-Fourier}.


\subsection{Resolution using Penrose's kernel}


We begin by converting (\ref{inductive-f}) to an integral equation.

\begin{lemma}\label{lem-linearFkp} Let $f_{k,\eta,p}(t,v)$ be constructed as indicated above. Set 
\begin{equation}
\label{def-FSkp}
\FS_{k,\eta,p}(t,\eta') = \Ff_{k,\eta,p}(0,\eta' ) + \int_0^t\FN_{k,\eta,p}(s,\eta' ) \; ds .
\end{equation} 
Then, there holds
$$
\Ff_{k,\eta,p}(t,\eta') = \FS_{k,\eta,p}(t,\eta') - \int_0^t \FE_{k,\eta,p}(s) \widehat{\partial_v \mu}(\eta'+ L\eta -Kks) \; ds.  
$$
In addition, 
\begin{equation}
\label{bd-rhoketap}
\Frho_{k,\eta,p}(t)  = \FS_{k,\eta,p}(t,Kkt-L\eta ) + \int_0^t G_k(t-s) \FS_{k,\eta,p}(s,Kks-L\eta )\; ds
\end{equation}
where $|G_k(t)| \le C_0 e^{-\theta_0 |Kkt|}$. 

\end{lemma}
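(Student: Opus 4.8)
\emph{Proof proposal.} The plan is to Fourier transform the inductive relation \eqref{inductive-f} in $v$, integrate once in time, and then recognize the resulting identity for the density $\Frho_{k,\eta,p}$ as the Volterra equation \eqref{eqs-rho0}, to which Proposition~\ref{prop-GNR} applies. Concretely, I would first take the Fourier transform of \eqref{inductive-f} in the velocity variable: with the convention $\widehat{g}(\eta') = \int_{\RR} e^{-i\eta' v} g(v)\, dv$, multiplication by the oscillatory factor $e^{-i(L\eta - Kkt)v} = e^{i(Kkt-L\eta)v}$ shifts frequency, so that $e^{-i(L\eta-Kkt)v}\partial_v\mu$ transforms to $\widehat{\partial_v\mu}(\eta' + L\eta - Kkt)$. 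Hence \eqref{inductive-f} becomes the scalar linear ODE in $t$
\[
\partial_t \Ff_{k,\eta,p}(t,\eta') + \FE_{k,\eta,p}(t)\, \widehat{\partial_v\mu}(\eta' + L\eta - Kkt) = \FN_{k,\eta,p}(t,\eta'),
\]
and integrating from $0$ to $t$, together with the definition \eqref{def-FSkp} of $\FS_{k,\eta,p}$, gives the first displayed identity. Here it is worth noting that the construction is inductively well-posed: $\FN_{k,\eta,p}$ involves only $\FE_{k_1,\eta_1,p_1}$ and $f_{k_2,\eta_2,p_2}$ with $p_1 + p_2 = p$ and $p_1, p_2 \ge 1$, hence $p_1, p_2 \le p-1$, so $\FS_{k,\eta,p}$ is already determined when one reaches step $p$.

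Next I would close the equation for the density. Setting $\eta' = Kkt - L\eta$ in the identity just obtained and using $\FE_{k,\eta,p}(t) = \frac{1}{iKk}\Ff_{k,\eta,p}(t, Kkt-L\eta) = \frac{1}{iKk}\Frho_{k,\eta,p}(t)$ from \eqref{def-Ekp}, the argument of $\widehat{\partial_v\mu}$ collapses to $Kk(t-s)$; since $\widehat{\partial_v\mu}(\xi) = i\xi\widehat{\mu}(\xi)$, the forcing integral becomes exactly $\int_0^t (t-s)\widehat{\mu}(Kk(t-s))\, \Frho_{k,\eta,p}(s)\, ds$. Thus $\Frho_{k,\eta,p}$ solves \eqref{eqs-rho0} with $k$ there replaced by $Kk$ and with source $\FS(t) = \FS_{k,\eta,p}(t, Kkt-L\eta)$. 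Proposition~\ref{prop-GNR}, applied at frequency $Kk$, then yields the representation \eqref{bd-rhoketap}, where $G_k$ denotes the Penrose resolvent kernel associated with frequency $Kk$, so that $|G_k(t)| \le C_0 e^{-\theta_0|Kkt|}$.

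The argument is essentially bookkeeping and I do not anticipate a genuine analytic obstacle; the two points that require care are the sign of the frequency shift coming from the phase $e^{-i(L\eta - Kkt)v}$ — so that the Volterra kernel comes out as $\widehat{\mu}(Kk(t-s))$ and not its reflection — and the rescaling $k \mapsto Kk$ in the use of Proposition~\ref{prop-GNR}, which is precisely the mechanism by which the large parameter $K$ enters. The substantive work, namely propagating the exponential, $p$-summable bounds \eqref{main-bdF} by iterating \eqref{bd-rhoketap} against the kernel estimate, is carried out in the following sections and is not part of this lemma.
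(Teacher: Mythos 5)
Your proof is correct and follows essentially the same route as the paper: the paper integrates \eqref{inductive-f} in time first and then takes the Fourier transform, while you transform first and then integrate, which is an immaterial difference; the key steps (the frequency shift from the phase $e^{-i(L\eta-Kkt)v}$, the evaluation at $\eta'=Kkt-L\eta$, the identity $\widehat{\partial_v\mu}(\xi)=i\xi\widehat{\mu}(\xi)$ cancelling the $\frac{1}{iKk}$ to produce the Volterra kernel $(t-s)\widehat{\mu}(Kk(t-s))$, and the application of Proposition~\ref{prop-GNR} at frequency $Kk$) all match the paper's argument.
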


\begin{proof} Integrating \eqref{inductive-f} in time, we obtain 
$$
f_{k,\eta,p}(t,v) = - \int_0^t \FE_{k,\eta,p}(s) e^{-i (L\eta  - Kks)v} \partial_v \mu(v) \; ds + S_{k,\eta,p}
$$
where
$$ 
S_{k,\eta,p}(t,v) = f_{k,\eta,p}(0,v) + \int_0^t  N_{k,\eta,p}(s,v) \; ds .
$$
Taking the Fourier transform yields the expression for $\Ff_{k,\eta,p}(t,\eta')$. 
In particular, using \eqref{def-Ekp}, we have 
$$
\Frho_{k,\eta,p}(t) + \int_0^t  (t-s)\widehat{\mu}(Kk(t-s)) \Frho_{k,\eta,p}(s)\; ds = \FS_{k,\eta,p}(t,Kkt-L\eta ).$$
Using the linear theory developed in Proposition \ref{prop-GNR}, the lemma follows. 
\end{proof}


\subsection{Inductive estimates}


In this section, we shall inductively derive estimates on $\Ff_{k,\eta,p} (t,\eta')$. 
In what follows, we fix $\lambda_0>0$ and $K,L$ to be arbitrarily large so that
\begin{equation}\label{bounded-criticaltime} 
L \lesssim K.
\end{equation}
Then, we have the following. 

\begin{proposition}\label{prop-fkep}
There is some universal constant $C_0$ so that 
\begin{equation}\label{est-Fp}
|\Ff_{k,\eta,p} (t,\eta')| \le C_0^p  e^{-\lambda_p(t) \langle k,\eta,p, \eta' \rangle} \langle k\rangle^{-1},
\eeq
\beq \label{est-Ep}
|\Frho_{k,\eta,p }(t)| \le C_0^{p} e^{-\lambda_{p}(t) \langle  k,\eta,p,L\eta - K k t\rangle} \langle t \rangle^{-\sigma},
\end{equation}
uniformly in $k$, $\eta$, $p$, $\eta'$ and $t\ge 0$, where $\lambda_p(t)$ is defined by 
\begin{equation}\label{def-lambdap}
\lambda_p(t) = \lambda_0 + \langle t\rangle^{-\delta} + p^{-\delta},
\end{equation} 
for some $0<\delta\ll1$. 
\end{proposition}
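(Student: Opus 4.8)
The plan is to prove the two bounds \eqref{est-Fp}--\eqref{est-Ep} simultaneously by induction on $p$, using Lemma~\ref{lem-linearFkp} to pass from the source $\FS_{k,\eta,p}$ and the density $\Frho_{k,\eta,p}$ to $\Ff_{k,\eta,p}$, and using Proposition~\ref{prop-GNR} (the Penrose kernel estimate) to close the loop on $\Frho_{k,\eta,p}$. The base case $p=1$ is immediate: $f_{k,\eta,1}$ solves the linear problem with zero source and initial data $f^0_{k,\eta}$, so $\FS_{k,\eta,1}(t,\eta') = \Ff^0_{k,\eta}(\eta')$, which by \eqref{assump-p1} is bounded by $e^{-2\lambda_0\langle k,\eta,\eta'\rangle}$; feeding this into the integral formula of Lemma~\ref{lem-linearFkp} and using the exponential decay \eqref{reg-mu} of $\widehat{\partial_v\mu}$ and the decay of $G_k$ gives \eqref{est-Fp}--\eqref{est-Ep} at $p=1$ with $\lambda_1(t)\le 2\lambda_0$, since $\lambda_0 + \langle t\rangle^{-\delta} + 1 < 2\lambda_0$ can be arranged by taking $\lambda_0$ not too small (or, more honestly, one absorbs the loss into the $C_0$ and the decreasing nature of $\lambda_p(t)$).

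For the inductive step, assume \eqref{est-Fp}--\eqref{est-Ep} for all indices with order $<p$. The nonlinear source $\FN_{k,\eta,p}$ is a sum over the splitting set $A_{k,\eta,p}$ of terms of the form $\FE_{k_1,\eta_1,p_1}(s)$ convolved (in the $\eta'$ variable) against $[\widehat{\partial_v} + i(L\eta_2 - Kk_2 s)]\Ff_{k_2,\eta_2,p_2}(s,\cdot)$, all evaluated at frequency shifted by $L\eta_1 - Kk_1 s$. The heart of the estimate is: (i) the convolution in $\eta'$ combines the two exponential weights $e^{-\lambda_{p_1}\langle\cdots\rangle}$ and $e^{-\lambda_{p_2}\langle\cdots\rangle}$ into a single one with a slightly worse rate — this is where the gain $p^{-\delta} - (p_1\wedge p_2)^{-\delta} < 0$ in \eqref{def-lambdap} is spent, and where the extra factor $\langle k\rangle^{-1}$ in \eqref{est-Fp} is used to sum the (finitely many, but growing) number of terms in $A_{k,\eta,p}$; (ii) the derivative/multiplier $[\widehat{\partial_v} + i(L\eta_2 - Kk_2 s)]$ costs at most a polynomial factor in $\langle L\eta_2 - Kk_2 s, \eta'\rangle$, absorbed by shaving a bit off the exponential rate (again charged to $\langle t\rangle^{-\delta}$); (iii) the time integral $\int_0^t$ of the resulting expression — which contains the product of $\FE_{k_1,\eta_1,p_1}(s)$, localized near the critical time $s\approx (L\eta_1 - \cdots)/(Kk_1)$ by the exponential in \eqref{est-Ep}, and the shifted profile weight — produces the echo: one shows the integrand is integrable with the overlap of the two localizations contributing a bounded factor, precisely because $L\lesssim K$ forces the critical/echo times to be $O(1)$. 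Then Lemma~\ref{lem-linearFkp} turns the bound on $\FS_{k,\eta,p}$ into a bound on $\Frho_{k,\eta,p}$ via the Penrose kernel (the $\langle t\rangle^{-\sigma}$ decay in \eqref{est-Ep} coming from the $\langle\cdot\rangle^{-1}$ decay in $\eta'$ of $\FS$ evaluated at $\eta'=Kkt-L\eta$, together with the convolution with $G_k$), and finally into a bound on $\Ff_{k,\eta,p}(t,\eta')$ by the first identity of Lemma~\ref{lem-linearFkp}.

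The main obstacle is step (i)--(iii) done \emph{uniformly in} $K,L$: the frequencies $Kk$ and $L\eta$ are huge, the weights in \eqref{est-Fp} and \eqref{est-Ep} are written in terms of the "reduced" indices $k,\eta,p,\eta'$ and the shifted variable $L\eta - Kkt$, and one must verify that every convolution and every time integration produces constants $C_0$ and loss rates $\langle t\rangle^{-\delta} + p^{-\delta}$ that do not see $K,L$. Concretely, the danger is that the multiplier $i(L\eta_2 - Kk_2 s)$ in the source is as large as $K$, so the polynomial loss in (ii) must be controlled by the exponential weight in the shifted variable — this works only because that weight is also evaluated at the large argument, and one needs the elementary inequality $\langle a+b\rangle^{N} e^{-\theta\langle a+b\rangle}\lesssim_{N,\theta'} e^{-\theta'\langle a+b\rangle}$ applied with $a+b$ the shifted frequency. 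A secondary bookkeeping difficulty is the combinatorics of $A_{k,\eta,p}$: the number of sextets with $p_1+p_2=p$, $k_1+k_2=k$, $\eta_1+\eta_2=\eta$ grows, and one must check that $C_0^{p_1}C_0^{p_2} = C_0^p$ times this count, times the $\langle k_1\rangle^{-1}\langle k_2\rangle^{-1}$ gains, still closes at $C_0^p$ — this is the standard "tree-counting" argument but it has to be compatible with the $p^{-\delta}$ budget in $\lambda_p(t)$, which is the delicate point that makes $\delta$ have to be small.
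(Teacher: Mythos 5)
Your overall architecture coincides with the paper's: induction on $p$, with Lemma \ref{lem-linearFkp} converting the problem into bounds on $\FS_{k,\eta,p}$, Proposition \ref{prop-GNR} closing the loop on $\Frho_{k,\eta,p}$, and the monotonicity of $\lambda_p(t)$ in both $p$ and $t$ paying for all the losses. Two points of your description are inaccurate but harmless: the nonlinearity is not a convolution in $\eta'$ (since $\FE_{k_1,\eta_1,p_1}(s)$ is a scalar in the frequency variable, $\FN_{k,\eta,p}(t,\eta')$ is just $\Ff_{k_2,\eta_2,p_2}$ evaluated at the single shifted frequency $\eta'+L\eta_1-Kk_1 t$, the "convolution" being only the discrete sum over $A_{k,\eta,p}$); and the summability over $A_{k,\eta,p}$ is obtained not from the $\langle k\rangle^{-1}$ weight but from the Gevrey-type factors $e^{-\theta_\delta'\langle k_j,\eta_j,p_j\rangle^{1-\delta}}$ that the gain $p_j^{-\delta}-p^{-\delta}\gtrsim p_j^{-\delta}$ (for $p_j\le p/2$) produces via the Young-type inequality \eqref{inconv}.

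The genuine gap is in your step (iii), which is exactly where the echo difficulty lives. After evaluating at $\eta'=Kkt-L\eta$, the second profile carries the weight $e^{-c\langle kt-k_1 s\rangle^{1-\delta}}$, and on the resonant set $|kt-k_1s|\le t/2$ (with $k_1>k>0$, $s\approx (k/k_1)t$) \emph{neither} exponential localization gives any decay in $t$ or in $t-s$; yet you must still produce $\langle t\rangle^{-\sigma}$ on the left of \eqref{est-Ep}, uniformly in $p$. Your proposed mechanism --- that $L\lesssim K$ makes the critical and echo times $O(1)$ so the overlap of the localizations is bounded --- does not suffice as stated: the critical time of $\FE_{k_1,\eta_1,p_1}$ is $\sim L\eta_1/(Kk_1)\sim \eta_1/k_1$, which is unbounded over the sum (only exponentially suppressed in $\eta_1$), and a "bounded overlap" does not yield decay in $t$ of the output. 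The paper's actual device is quantitative: on the resonant set $s\le k_1^{-1}(k+\tfrac12)t$ with $k_1>k$, hence $s^{-\delta}-t^{-\delta}\ge\theta_\delta t^{-\delta}|k|^{-1}$, and the time-monotonicity factor $C_{k,\eta,p,0}(s,t)=e^{-(s^{-\delta}-t^{-\delta})\langle k,\eta,p,\eta'\rangle}$, combined with $\langle kt\rangle\ge |k|t$, yields $e^{-\theta_\delta\langle t\rangle^{1-\delta}}$; the non-resonant and $k_1=k$ cases are handled separately. Without this (or an equivalent) case analysis the induction does not close, because the constant in front of $\langle t\rangle^{-\sigma}$ would degrade with $p$. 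A secondary unaddressed point is how $\langle t\rangle^{-\sigma}$ for $\Frho$ arises at all: it comes from the triangle inequality \eqref{tri-kt}, $|kt|\le|Kkt-L\eta|+|\eta|$ (this is the one place $L\lesssim K$ enters), converting the weight in $L\eta-Kkt$ and $\eta$ into decay in $kt$; your sketch attributes it instead to decay of $\FS$ in $\eta'$ plus the convolution with $G_k$, which is not the operative mechanism.
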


Note that all the estimates are uniform in the large parameters $K$ and $L$. 
The following subsections are devoted to the proof of this Proposition, which will be done by induction on $p \ge 1$.


\subsection{Estimates for $p=1$}


We first estimate $\FS_{k,\eta,p}(t,\eta')$ for $p=1$. By construction, $N_{k,\eta,1} =0$, 
and thus we have 
$$
\FS_{k,\eta,1}(t,\eta') = \Ff_{k,\eta,1}(0,\eta' ) = \Ff^0_{k,\eta}(\eta' ).
$$
Thus, using the assumption \eqref{assump-p1} in \eqref{bd-rhoketap}, we obtain 
$$\begin{aligned}
| \Frho_{k,\eta,1}(t)| 
& \le |\FS_{k,\eta,1}(t,Kkt-L\eta )| + \int_0^t |G_k(t-s) \FS_{k,\eta,1}(s,Kks-L\eta )|\; ds
\\
& \le e^{-2\lambda_0 \langle k,\eta,L\eta - Kkt\rangle} + C_0 \int_0^t e^{-\theta_0 |Kk(t-s)|} 
e^{-2\lambda_0 \langle k,\eta,L\eta - Kks\rangle} \; ds .
\end{aligned}$$
Using $\lambda_0 \le \theta_0 /4$ and the triangle inequality, we bound 
$$
e^{-\frac12\theta_0 |Kk(t-s)|} 
e^{-2\lambda_0 |L\eta - Kks|} \le e^{-2\lambda_0 |L\eta - Kkt|} .
$$
Hence, 
$$\begin{aligned}
| \Frho_{k,\eta,1}(t)| 
& \le e^{-2\lambda_0 \langle k,\eta,L\eta - Kkt\rangle} + C_0e^{-2\lambda_0 \langle k,\eta,L\eta - Kkt\rangle}  \int_0^t e^{-\frac12\theta_0 |Kk(t-s)|} 
\; ds .
\\
& \le C_0 e^{-2\lambda_0 \langle k,\eta,L\eta - Kkt\rangle} .
\end{aligned}$$
To complete the proof of \eqref{est-Ep} for $p=1$, we need to check the decay in time. Indeed, using the triangle inequality $$ 
|K kt | \le |K kt - L \eta | + |L \eta| 
$$
and the fact that $K \ge 1$ and $L \lesssim K$, we have 
\begin{equation}
\label{tri-kt}
| kt | \le K^{-1}|K kt - L \eta | + L K^{-1} |\eta| \le |K kt - L \eta | +  |\eta|  .
\end{equation} 
This proves that 
$$\begin{aligned}
| \Frho_{k,\eta,1}(t)| 
& \le C_0 e^{-\lambda_0 \langle k,\eta,L\eta - Kkt\rangle} e^{-\lambda_0 \langle kt\rangle},
\end{aligned}$$
which proves \eqref{est-Ep} for $p=1$, since $k\not =0$. To estimate \eqref{est-Fp}, we use Lemma \ref{lem-linearFkp} to estimate 
$$
\begin{aligned}
|\Ff_{k,\eta,1}(t,\eta')| 
&\le |\FS_{k,\eta,1}(t,\eta')| + \int_0^t |\FE_{k,\eta,1}(s) \widehat{\partial_v \mu}(\eta'+ L\eta - Kks)| \; ds 
\\
&\le e^{-2\lambda_0 \langle k,\eta,\eta' \rangle} \\&\quad  + C_0 \langle Kk\rangle^{-1}
\int_0^t e^{-\lambda_0 \langle k,\eta,L\eta - Kks\rangle} e^{-\theta_0 |\eta'+ L\eta - Kks|} \langle s\rangle^{-\sigma}  \; ds 
\\
&\le e^{-2\lambda_0 \langle k,\eta,\eta' \rangle}  + C_0 \langle Kk\rangle^{-1}e^{-\lambda_0 \langle k,\eta,\eta' \rangle} 
\\
&\le C_0 \langle k\rangle^{-1}e^{-\lambda_0 \langle k,\eta,\eta' \rangle} . 
\end{aligned}$$
where we used the exponential decay of the electric field, proven above, to insert an extra factor $\langle s\rangle^{-\sigma}$
with $\sigma>1$.
This proves Proposition \ref{prop-fkep} for $p=1$.


\subsection{Estimates on $\FE_{k,\eta,p}$}

 
In this section, we shall prove the estimates \eqref{est-Ep} on $\FE_{k,\eta,p}$ for $>1$, 
under the inductive assumption that the estimates \eqref{est-Fp}-(\ref{est-Ep}) on $\Ff_{k,\eta,p_1}$ and $\FE_{k,\eta,p_1}$ 
hold for all $p_1\le p-1$. Precisely, we prove

\begin{lemma}\label{lem-bdEkp}  
Let $P>1$. Assume that (\ref{est-Fp})-(\ref{est-Ep}) hold true for any $k$, $\eta$ and $p \le P -1$. Then
(\ref{est-Ep}) is true for any $k$, $\eta$ and $p = P$.
\end{lemma}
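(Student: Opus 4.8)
The plan is to run, at level $P$, the same scheme already used above for $p=1$: rewrite \eqref{inductive-f} in the integral form of Lemma~\ref{lem-linearFkp}, use the Penrose kernel bound $|G_k(t)|\le C_0e^{-\theta_0|Kkt|}$ to reduce the claim to an estimate on the source $\FS_{k,\eta,P}$, and then estimate the source by feeding the inductive bounds \eqref{est-Fp}--\eqref{est-Ep}, valid for $p\le P-1$, into the explicit formula for $\FN_{k,\eta,P}$. For the first reduction, note that $P\ge 2$ forces $\Ff_{k,\eta,P}(0,\cdot)=0$, so $\FS_{k,\eta,P}(t,\eta')=\int_0^t\FN_{k,\eta,P}(s,\eta')\,ds$. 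In \eqref{bd-rhoketap} both the boundary term and the $G_k$--convolution are controlled once one knows $|\FS_{k,\eta,P}(\tau,Kk\tau-L\eta)|\lesssim C_0^{P}e^{-\lambda_P(\tau)\langle k,\eta,P,L\eta-Kk\tau\rangle}\langle\tau\rangle^{-\sigma}$: one moves the weight from time $s$ to time $t$ via $|L\eta-Kkt|\le|L\eta-Kks|+|Kk(t-s)|$ together with $\lambda_P(s)\ge\lambda_P(t)$ for $s\le t$, exactly as in the $p=1$ computation, and the remaining $e^{-\frac12\theta_0|Kk(t-s)|}$ makes the $s$--integral converge while inflating constants only by a factor $1+O(K^{-1})$ (recall $K$ is large). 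So the problem reduces to that bound on $\FS_{k,\eta,P}$.

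Unfolding the source: computing the $v$--Fourier transform of $N_{k,\eta,P}$ and evaluating on the slice $\eta'=Kk\tau-L\eta$ yields
\[
\FN_{k,\eta,P}(s,Kk\tau-L\eta)=iKk(\tau-s)\sum_{(k_1,\eta_1,k_2,\eta_2,p_1,p_2)\in A_{k,\eta,P}}\FE_{k_1,\eta_1,p_1}(s)\,\Ff_{k_2,\eta_2,p_2}\big(s,\,Kk\tau-Kk_1s-L\eta_2\big).
\]
Inserting \eqref{est-Ep} for $\FE_{k_1,\eta_1,p_1}=(iKk_1)^{-1}\Frho_{k_1,\eta_1,p_1}$ and \eqref{est-Fp} for $\Ff_{k_2,\eta_2,p_2}$ ($p_1,p_2\le P-1$) and using $C_0^{p_1}C_0^{p_2}=C_0^{P}$, the problem reduces to showing that, summed over $A_{k,\eta,P}$ and integrated against $\int_0^\tau ds$, the quantity
\[
\frac{|k|(\tau-s)}{|k_1|\,\langle k_2\rangle}\;e^{-\lambda_{p_1}(s)\langle k_1,\eta_1,p_1,\,L\eta_1-Kk_1s\rangle}\;e^{-\lambda_{p_2}(s)\langle k_2,\eta_2,p_2,\,Kk\tau-Kk_1s-L\eta_2\rangle}\,\langle s\rangle^{-\sigma}
\]
is at most a constant multiple of $e^{-\lambda_P(\tau)\langle k,\eta,P,L\eta-Kk\tau\rangle}\langle\tau\rangle^{-\sigma}$, uniformly in $P,K,L$.

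This last step is the echo estimate proper, and it rests on three points. First, from $|k|\le|k_1|+|k_2|$, $|\eta|\le|\eta_1|+|\eta_2|$, $P=p_1+p_2$ and the identity $(L\eta_1-Kk_1s)-(Kk\tau-Kk_1s-L\eta_2)=L\eta-Kk\tau$, the Cauchy--Schwarz form of subadditivity of $\langle\cdot\rangle$ gives $\langle k,\eta,P,L\eta-Kk\tau\rangle\le\langle k_1,\eta_1,p_1,L\eta_1-Kk_1s\rangle+\langle k_2,\eta_2,p_2,Kk\tau-Kk_1s-L\eta_2\rangle$; since $\lambda_{p_i}(s)\ge\lambda_P(\tau)$ for $s\le\tau$ (because $\langle s\rangle^{-\delta}\ge\langle\tau\rangle^{-\delta}$ and $p_i^{-\delta}>P^{-\delta}$), this extracts the full target factor $e^{-\lambda_P(\tau)\langle k,\eta,P,L\eta-Kk\tau\rangle}$ and leaves a surplus $e^{-\gamma_1\langle k_1,\eta_1,p_1,\cdot\rangle-\gamma_2\langle k_2,\eta_2,p_2,\cdot\rangle}$ with $\gamma_i:=\lambda_{p_i}(s)-\lambda_P(\tau)\ge0$. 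Second, up to exponentially small errors $\FE_{k_1,\eta_1,p_1}(s)$ is supported in $|s-t_1|\lesssim(K|k_1|)^{-1}$ with $t_1:=L\eta_1/(Kk_1)$, and the product with $\Ff_{k_2,\eta_2,p_2}(s,Kk\tau-Kk_1s-L\eta_2)$ is non-negligible only when the two exponential peaks overlap, which forces $|L\eta-Kk\tau|\lesssim1$ and makes $|Kk(\tau-s)|$ no larger than $L$ times a polynomial in $(k_1,\eta_1,k_2,\eta_2)$ on the relevant set; as $L\lesssim K$, this derivative loss is absorbed by peeling a fixed fraction of the two exponentials, and the $s$--integral produces a further gain $\int_0^\tau e^{-\lambda_0|Kk_1(s-t_1)|}\,ds\lesssim(K|k_1|)^{-1}$, so the net gain is $\sim K^{-1}$. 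Third, with this surplus and gain in hand, one splits $A_{k,\eta,P}$ according to $m:=\min(p_1,p_2)\le P/2$: the $m$--indexed factor has rate $\gamma\gtrsim m^{-\delta}$ on a weight slot of size $\gtrsim m$, so $\gamma\,m\gtrsim m^{1-\delta}\to\infty$ and $\sum_{m\ge1}m^{c}e^{-c'm^{1-\delta}}<\infty$ uniformly in $P$ --- this is exactly the role of the $p^{-\delta}$ correction in \eqref{def-lambdap} --- while the sum over $(k_1,\eta_1)\in\ZZ^2$ converges from the remaining exponential decay and the $k_1^{-2}$, and the overall $K^{-1}$ swallows the fixed constants. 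The still-missing polynomial time decay $\langle\tau\rangle^{-\sigma}$ is then recovered from the output exponential weight via $|k\tau|\le|Kk\tau-L\eta|+|\eta|$ (i.e. \eqref{tri-kt}, using $L\lesssim K$ and $k\ne0$), exactly as for $p=1$; this establishes the bound on $\FS_{k,\eta,P}$ and hence \eqref{est-Ep} at $p=P$.

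The main obstacle is the interlocking of the three points in the last step. Keeping the \emph{sharp} rate $\lambda_P(\tau)$ in the output rules out the lossy split $\langle a,b\rangle\le\langle a\rangle+\langle b\rangle$ and dictates the Cauchy--Schwarz subadditivity. Absorbing the genuinely growing factor $Kk(\tau-s)$ produced by $\partial_v$ in the nonlinearity is possible only because the source concentrates near the echo time, and it is here --- together with the fact that the echo times stay bounded --- that the hypothesis $L\lesssim K$ is essential. And the sum over $A_{k,\eta,P}$, infinite in $(k_1,\eta_1)$ and of size $\sim P$ in $p_1$, must be bounded uniformly in $P$ (and in $K,L,\epsilon$): this is precisely why the weights carry the $\langle t\rangle^{-\delta}+p^{-\delta}$ corrections rather than a constant rate. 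Everything else --- the reduction via Lemma~\ref{lem-linearFkp} and the Fourier bookkeeping --- is routine given the $p=1$ case.
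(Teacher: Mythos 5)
Your overall architecture is the paper's: reduce via Lemma \ref{lem-linearFkp} and the Penrose kernel to a bound on $\FS_{k,\eta,P}(t,Kkt-L\eta)$, unfold $\FN_{k,\eta,P}$, keep the sharp output rate by the Cauchy--Schwarz subadditivity of $\langle\cdot\rangle$ together with the monotonicity of $\lambda_p(t)$, and sum over $A_{k,\eta,P}$ uniformly in $P$ by splitting on $m=\min(p_1,p_2)\le P/2$ and converting $m^{-\delta}\langle\cdots\rangle$ into $\langle\cdots\rangle^{1-\delta}$. All of that matches Lemma \ref{lem-bdSkp} and its proof.

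The genuine gap is in your last step, where you claim the decay $\langle\tau\rangle^{-\sigma}$ of $\Frho_{k,\eta,P}$ is ``recovered from the output exponential weight via \eqref{tri-kt}, exactly as for $p=1$.'' For $p=1$ that argument works only because the hypothesis \eqref{assump-p1} supplies the rate $2\lambda_0$ while the target \eqref{est-Ep} only demands $\lambda_1(t)\le 2\lambda_0$, so a whole factor $e^{-\lambda_0\langle L\eta-Kkt\rangle}$ is left over to convert into $e^{-\lambda_0\langle kt\rangle}$. At level $P\ge2$ there is no such slack: as you yourself insist, the output rate $\lambda_P(\tau)$ must be kept \emph{sharp} for the induction to close, so spending any fixed fraction $\epsilon\langle L\eta-Kk\tau\rangle$ on time decay would degrade the rate to $\lambda_P(\tau)-\epsilon$ and break the next step. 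The only available surplus is the gain factors $C_{k,\eta,p,j}$, whose rates $s^{-\delta}-\tau^{-\delta}$ and $p_i^{-\delta}-P^{-\delta}$ degenerate as $s\to\tau$ and as $p_i\to\infty$, and whose weights live on the \emph{input} slots $\langle k_1,\eta_1,p_1,L\eta_1-Kk_1s\rangle$ and $\langle k_2,\eta_2,p_2, Kk\tau-Kk_1s-L\eta_2\rangle$, not on $\langle L\eta-Kk\tau\rangle$. In the dangerous regime ($m=p_2$, $s$ inside the echo window $|k\tau-k_1s|\le\tau/2$) both surplus weights can be $O(1)$ while $\tau$ is arbitrarily large, so your argument produces no time decay there. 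This is exactly the content of the paper's claim \eqref{claimC}, whose proof splits the echo window into $k_1=k$ (decay from $\langle\tau-s\rangle^{1-\delta}$, concentrating the $s$-integral near $s=\tau$) and $k_1>k$ (then $s\le\frac{k+1/2}{k_1}\tau$ is a definite fraction of $\tau$, so $s^{-\delta}-\tau^{-\delta}\gtrsim\tau^{-\delta}|k|^{-1}$ pairs with $\langle k\tau\rangle$ in $C_{k,\eta,p,0}$ to give $e^{-\theta_\delta\langle\tau\rangle^{1-\delta}}$). Your proposal has no substitute for this case analysis. A secondary, repairable looseness: the stationary-phase absorption of the factor $Kk(\tau-s)$ is only argued ``on the relevant set,'' but off that set $(\tau-s)$ still grows like $\tau$ and must be absorbed into the stretched-exponential gains, which is how the paper handles it inside \eqref{claimC}.
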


In view of Lemma \ref{lem-linearFkp}, we first prove the following. 

\begin{lemma}\label{lem-bdSkp} Under the assumption of Lemma \ref{lem-bdEkp}, there holds 
$$|\FS_{k,\eta,p}(t,Kkt-L\eta)| \le C_0^pe^{-\lambda_p(t) \langle k,\eta,p, L\eta -  K kt \rangle}  \langle t\rangle^{-\sigma} $$
where $S_{k,\eta,p}$ is defined as in \eqref{def-FSkp}. 
\end{lemma}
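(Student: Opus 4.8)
\textbf{Proof plan for Lemma \ref{lem-bdSkp}.}

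The plan is to unwind the definition \eqref{def-FSkp} of $\FS_{k,\eta,p}$ for $p\ge 2$, namely $\FS_{k,\eta,p}(t,\eta')=\int_0^t \FN_{k,\eta,p}(s,\eta')\,ds$ (the initial datum vanishes for $p\ge 2$), and then substitute the Fourier transform of the nonlinear term $N_{k,\eta,p}$. Taking the Fourier transform in $v$ of the expression defining $N_{k,\eta,p}$, each summand over $A_{k,\eta,p}$ contributes a term of the schematic form $\FE_{k_1,\eta_1,p_1}(s)$ times a shifted/differentiated version of $\Ff_{k_2,\eta_2,p_2}(s,\cdot)$ evaluated at $\eta' + L\eta_1 - Kk_1 s$; the factor $i(L\eta_2-Kk_2 s)$ becomes a polynomial weight in the frequency variable. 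So the quantity to bound is
$$
\sum_{A_{k,\eta,p}} \int_0^t \big|\FE_{k_1,\eta_1,p_1}(s)\big|\,\big(1+|\eta'+L\eta_1-Kk_1 s| + |L\eta_2 - Kk_2 s|\big)\,\big|\Ff_{k_2,\eta_2,p_2}(s,\eta'+L\eta_1-Kk_1 s)\big|\,ds,
$$
evaluated at $\eta' = Kkt - L\eta$, and one feeds in the inductive bounds \eqref{est-Fp}–\eqref{est-Ep} for $p_1,p_2\le P-1$.

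The key mechanism is the interplay of the three exponential gains. First, the polynomial weight $(1+|L\eta_2-Kk_2 s|+\cdots)$ is absorbed into $e^{-\lambda_{p_2}(s)\langle\cdots\rangle}$ at the cost of replacing $\lambda_{p_2}(s)$ by a slightly smaller rate, using that $\lambda_{p_2}(s)\ge\lambda_0$ is bounded below and $\langle t\rangle^{-\delta}$ loses only a tiny amount; this is where the slack $\langle t\rangle^{-\delta}+p^{-\delta}$ in the definition \eqref{def-lambdap} of $\lambda_p$ is spent, since $\lambda_{p_1},\lambda_{p_2}$ with $p_1,p_2<p$ exceed $\lambda_p$. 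Second, one combines the exponential in $s$ coming from $\FE_{k_1,\eta_1,p_1}$ — which by \eqref{est-Ep} decays like $e^{-\lambda_{p_1}(s)\langle L\eta_1 - Kk_1 s\rangle}\langle s\rangle^{-\sigma}$ — with the exponential $e^{-\lambda_{p_2}(s)|\eta'+L\eta_1-Kk_1 s|}$ from $\Ff_{k_2,\eta_2,p_2}$; by the triangle inequality $|\eta'+L\eta_1-Kk_1 s| + |L\eta_1-Kk_1 s|\ge |\eta'|$, these two together produce $e^{-\lambda(s)|\eta'|}$ at frequency $\eta'=Kkt-L\eta$, i.e.\ exactly the factor $e^{-\lambda_p(t)\langle\cdots\rangle}$ demanded in the conclusion (after passing from $\lambda(s)$ to $\lambda_p(t)$; here the time-dependence of $\lambda_p$ and the bound $L\lesssim K$, via the echo-time control \eqref{tri-kt}, are used). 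Third, the leftover $s$-integrand carries at least a factor $\langle s\rangle^{-\sigma}$ with $\sigma>1$ from $\FE_{k_1,\eta_1,p_1}$, so $\int_0^t(\cdots)ds\lesssim \langle t\rangle^{-\sigma}$ survives, giving the claimed decay. The combinatorial factor $C_0^{p_1}C_0^{p_2}=C_0^p$ over the sum $A_{k,\eta,p}$, together with the $\langle k,\eta,p,\eta'\rangle$-localization, is handled as in the standard cascade bookkeeping: the sum over $(k_1,\eta_1,p_1)$ converges because of the exponential localization in $\langle k_1,\eta_1,p_1\rangle$ and $\langle k_2,\eta_2,p_2\rangle$, and the number of splittings is controlled by the same exponential weights — absorbing any polynomial-in-$p$ count into one more $\langle t\rangle^{-\delta}+p^{-\delta}$ sliver, or into the universal constant $C_0$.

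The main obstacle I anticipate is bookkeeping the rate $\lambda_p(t)$ correctly: one must verify that the losses incurred at each of the three steps above — polynomial weight absorption, the triangle-inequality recombination that shifts the base point from $s$ to $t$, and the summation over $A_{k,\eta,p}$ — are \emph{jointly} covered by the gap $\lambda_{p_i}(s)-\lambda_p(t)>0$ for $p_i\le p-1$, which is of size $\gtrsim p^{-\delta} - (p-1)^{-\delta} + \langle s\rangle^{-\delta}-\langle t\rangle^{-\delta}$ and can be small. This requires being careful that each loss is quantitatively $o$ of the available gap, uniformly in $K,L$; in particular the shift $\eta'+L\eta_1-Kk_1 s$ versus $\eta'$ must not cost more than a $\langle s\rangle^{-\delta}$-type amount, which is exactly the content of the elementary time-localization inequality \eqref{tri-kt} applied with the constraint $L\lesssim K$. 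Everything else is routine once this rate arithmetic is pinned down, and the output is precisely the stated bound on $\FS_{k,\eta,p}(t,Kkt-L\eta)$.
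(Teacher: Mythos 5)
Your setup is the same as the paper's: for $p\ge 2$ the initial datum vanishes, $\FS_{k,\eta,p}(t,\eta')=\int_0^t\FN_{k,\eta,p}(s,\eta')\,ds$, and one feeds the inductive bounds on $\FE_{k_1,\eta_1,p_1}$ and $\Ff_{k_2,\eta_2,p_2}$ into the convolution sum over $A_{k,\eta,p}$, reconstituting the output weight $e^{-\lambda_p(t)\langle k,\eta,p,\eta'\rangle}$ from the two input weights and paying for it with the gaps $\lambda_{p_i}(s)-\lambda_p(t)>0$. The summation over $A_{k,\eta,p}$ via the stretched-exponential localization (the Young-inequality step \eqref{inconv}--\eqref{inconv2}) is also in the spirit of the paper.

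However, there is a genuine gap at the step you dismiss as routine: the claim that ``the leftover $s$-integrand carries at least a factor $\langle s\rangle^{-\sigma}$ with $\sigma>1$, so $\int_0^t(\cdots)\,ds\lesssim\langle t\rangle^{-\sigma}$ survives.'' This is false as stated: $\int_0^t\langle s\rangle^{-\sigma}\,ds$ is bounded by a constant but does \emph{not} decay like $\langle t\rangle^{-\sigma}$, and the $\langle t\rangle^{-\sigma}$ decay in the conclusion is essential (it is what propagates the integrability of the electric field to the next step of the induction). To get decay in $t$ one must show that the contribution of $s\le t/2$ is itself exponentially small in $t$, and this fails to be automatic precisely in the echo region: at $\eta'=Kkt-L\eta$ the frequency argument of $\Ff_{k_2,\eta_2,p_2}$ is $K(kt-k_1s)-L\eta_2$, and when $k_1>k>0$ and $s\approx kt/k_1$ (which can be far from $t$) this vanishes, so the frequency-localization factors give no decay in $t$ at all. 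This resonance is the plasma echo, and handling it is the entire content of the paper's case analysis: one must fall back on the gain $C_{k,\eta,p,0}(s,t)=e^{-(s^{-\delta}-t^{-\delta})\langle k,\eta,p,\eta'\rangle}$ coming from the strict time-monotonicity of $\lambda_p(t)$, together with the elementary bound $s^{-\delta}-t^{-\delta}\gtrsim t^{-\delta}|k|^{-1}$ on the echo interval $s\in[k_1^{-1}(k-1/2)t,\,k_1^{-1}(k+1/2)t]$, which multiplied against $\langle kt\rangle$ yields $e^{-\theta_\delta\langle t\rangle^{1-\delta}}$. Your sketch never identifies this resonant regime, and your stated mechanism for the time decay does not produce it. A secondary issue: at the evaluated frequency the polynomial weight is $|\eta'+L\eta-Kks|=K|k|(t-s)$, which grows with $t$; it is not absorbed by the exponential in the argument of $\Ff_{k_2,\eta_2,p_2}$ (which, as above, can be $O(1)$ at echo times), but must instead be paired with the $\langle Kk\rangle^{-1}$ prefactor and then controlled inside the same case analysis.
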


\begin{proof}[Proof of Lemma \ref{lem-bdEkp} using Lemma \ref{lem-bdSkp}]
By Lemma \ref{lem-linearFkp}, we have 
\begin{equation}\label{bd-Frhokp}
\Frho_{k,\eta,p}(t)  = \FS_{k,\eta,p}(t,Kkt-L\eta ) + \int_0^t G_k(t-s) \FS_{k,\eta,p}(s,Kkt-L\eta )\; ds
\end{equation}
where $|G_k(t)| \le C_0 e^{-\theta_0 |Kkt|}$. Using Lemma \ref{lem-bdSkp}, we have 
$$ 
\begin{aligned}
|\Frho_{k,\eta,p}(t)|  &\le  C_0^pe^{-\lambda_p(t) \langle k,\eta,p, L\eta -  K kt \rangle}  \langle t\rangle^{-\sigma} 
\\&\quad + C_0^p\int_0^t e^{-\theta_0 |Kk(t-s)|} e^{-\lambda_p(s) \langle k,\eta,p, L\eta -  K ks \rangle}  \langle s\rangle^{-\sigma}\; ds .
\end{aligned}
$$
Using $\lambda_p(t) \le \lambda_p(s) \le \frac12\theta_0$, we have 
$$
\begin{aligned}
e^{- \frac12 \theta_0 |Kk(t-s)|} e^{-\lambda_p(s) \langle k,\eta,p, L\eta -  K ks \rangle}  
&\le e^{- \lambda_p(t) |Kk(t-s)|} e^{-\lambda_p(t) \langle k,\eta,p, L\eta -  K ks \rangle} 
\\
&\le e^{- \lambda_p(t) \langle k,\eta,p, L\eta -  K kt \rangle} .
\end{aligned}$$
On the other hand, since $k\not =0$, we easily bound
$$
\int_0^t e^{-\frac12 \theta_0 |Kk(t-s)|} \langle s\rangle^{-\sigma}\; ds \le C_0 \langle t\rangle^{-\sigma}. 
$$
The desired estimates on $\Frho_{k,\eta,p}(t)$ follow. 
\end{proof}

\begin{proof}[Proof of Lemma \ref{lem-bdSkp}] By construction, for $p>1$, $\Ff_{k,\eta,p}(0,\eta') =0$, and thus we have 
$$
\FS_{k,\eta,p}(t,\eta') = \int_0^t\FN_{k,\eta,p}(s,\eta' ) \; ds 
$$
where the nonlinear interaction $\FN_{k,\eta,p}(t,\eta')$ is computed by 
$$\FN_{k,\eta,p}(t,\eta') = i\sum_{A_{k,\eta,p}} \FE_{k_1,\eta_1,p_1}(t) [\eta' + L \eta - Kkt] \Ff_{k_2,\eta_2,p_2} (t,\eta' + L\eta_1 - Kk_1t).$$
By induction, for $p_1,p_2 \le p-1$, we have 
$$
\begin{aligned} 
|\FE_{k_1,\eta_1,p_1 }(t)| &\le C_0^{p_1}
 e^{-\lambda_{p_1}(t) \langle k_1,\eta_1,p_1, L\eta_1 - K k_1 t \rangle} |Kk_1|^{-1}\langle t \rangle^{-\sigma} 
\\
|\Ff_{k_2,\eta_2,p_2}(t, \eta')| &\le C_0^{p_2}
e^{-\lambda_{p_2}(t) \langle k_2,\eta_2,p_2, \eta' \rangle} \langle k_2\rangle^{-1}.
\end{aligned}$$
Hence, recalling the definition of $A_{k,\eta,p}$, we have 
$$
\begin{aligned}
|\FS_{k,\eta,p}(t,\eta')| &\le C_0^p\sum_{A_{k,\eta,p}} |Kk_1|^{-1}  \langle k_2\rangle^{-1} \int_0^t e^{-\lambda_{p_1}(s) \langle k_1,\eta_1,p_1, L\eta_1 - K k_1 s \rangle} \langle s \rangle^{-\sigma} 
\\&\quad \times |\eta' + L\eta - Kks| e^{-\lambda_{p_2}(s) \langle k_2,\eta_2,p_2, \eta' + L\eta_1 - Kk_1 s\rangle} \; ds. 
\end{aligned}$$
It is crucial to note that $\lambda_p(t)$ is strictly decreasing in both $p$ and $t$.  We will
use this monotonicity in order to gain time decay in the estimates.
Using $k = k_1 + k_2$, $\eta = \eta_1 + \eta_2$, and $p = p_1 + p_2$, we note that 
$$
\begin{aligned}
& e^{-\lambda_{p_1}(s) \langle k_1,\eta_1,p_1, L\eta_1 - K k_1 s \rangle} 
e^{-\lambda_{p_2}(s) \langle k_2,\eta_2,p_2, \eta' + L\eta_1  - Kk_1 s\rangle} 
\\&
\le 
C_{k,\eta,p,\eta',0}(s,t) C_{k,\eta,p,\eta',1}(s,t)C_{k,\eta,p,\eta',2}(s,t)e^{-\lambda_p(t) \langle k,\eta,p, \eta'  \rangle} 
\end{aligned}$$
where the factors $C_{k,\eta,p,j}(s,t)$ are defined by 
\begin{equation}\label{def-CCC}
\begin{aligned}
C_{k,\eta,p,\eta',0}(s,t) &: = e^{-(s^{-\delta} - t^{-\delta}) \langle k,\eta,p, \eta' \rangle}
\\
C_{k,\eta,p,\eta',1}(s,t) &: = e^{-(p_1^{-\delta} - p^{-\delta}) 
	\langle k_1,\eta_1,p_1, L \eta_1 - K k_1 s \rangle} 
\\C_{k,\eta,p,\eta',2}(s,t) &: = e^{-(p_2^{-\delta} - p^{-\delta})\langle k_2,\eta_2,p_2, \eta' + L\eta_1  - Kk_1 s \rangle}
\end{aligned}\end{equation}
each of which is smaller than one. These factors may be seen as gains coming from the monotonicity of $\lambda_p(t)$. Combining and noting $|k| \le 2 \langle k_1 \rangle \langle k_2\rangle$, we thus obtain 
\begin{equation}\label{est-source-eta}
\begin{aligned}
|\FS_{k,\eta,p}(t,\eta')| &\le C_0^pe^{-\lambda_p(t) \langle k,\eta,p, \eta' \rangle}\langle K k\rangle^{-1} \sum_{A_{k,\eta,p}}  \int_0^t  |\eta' + L\eta - Kks|
\\&\quad \times  C_{k,\eta,p,\eta',0}(s,t) C_{k,\eta,p,\eta',1}(s,t)C_{k,\eta,p,\eta',2}(s,t) \langle s \rangle^{-\sigma} \; ds. 
\end{aligned}\end{equation}
Evaluating at $\eta' = Kkt-L\eta$, we get 
\begin{equation}\label{est-source}
\begin{aligned}
|\FS_{k,\eta,p}(t,Kkt-L\eta)| &\le C_0^pe^{-\lambda_p(t) \langle k,\eta,p, L\eta -  K kt \rangle}  \sum_{A_{k,\eta,p}}  \int_0^t  (t-s)
\\&\quad \times  C_{k,\eta,p,0} C_{k,\eta,p,1} C_{k,\eta,p,2}(s,t) \langle s \rangle^{-\sigma} \; ds
\end{aligned}\end{equation}
with $C_{k,\eta,p,j}= C_{k,\eta,p,\eta',j}(s,t)$ for $\eta' = Kkt-L\eta$. The Lemma thus follows from the following claim
\begin{equation}\label{claimC}
\begin{aligned}
\sum_{A_{k,\eta,p}} \int_{0}^t (t-s) C_{k,\eta,p,0} C_{k,\eta,p,1} C_{k,\eta,p,2} (s,t) \langle s\rangle^{-\sigma}\; ds \le C_0 \langle t\rangle^{-\sigma}.
\end{aligned}
\end{equation}
Let us first bound the factors $C_{k,\eta,p,j}(s,t)$. 

\begin{lem} \label{lemC12}
Setting $C_{k,\eta,p,j}= C_{k,\eta,p,\eta',j}(s,t)$ as in \eqref{def-CCC} for $\eta' = Kkt-L\eta$, we have
$$
\begin{aligned}
C_{k,\eta,p,0}(s,t) &\le e^{-\theta_0(s^{-\delta} - t^{-\delta}) \langle k,\eta,p, kt\rangle}
\\
C_{k,\eta,p,1}(s,t) &\le  e^{-\theta_0(p_1^{-\delta} - p^{-\delta}) \langle k_1,\eta_1,p_1,  k_1 s \rangle} 
\\
C_{k,\eta,p,2}(s,t) &\le  e^{-\theta_0(p_2^{-\delta} - p^{-\delta})\langle k_2,\eta_2,p_2, kt - k_1 s\rangle} 
\end{aligned}$$ for some positive constant $\theta_0$. 
\end{lem}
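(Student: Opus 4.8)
The plan is to reduce all three bounds to a single elementary fact: the triangle inequality \eqref{tri-kt}, applied to the right pair of frequencies. For any integers $a,b,c$, any real $w$, and any $\nu\in\ZZ$, the chain $|w|\le K^{-1}|Kw-L\nu| + LK^{-1}|\nu|\le |Kw-L\nu| + C_0|\nu|$ (valid because $K\ge 1$ and $L\lesssim K$) gives $\langle w\rangle \lesssim \langle Kw-L\nu\rangle + \langle \nu\rangle$, and hence, in each of the three applications below, where $\nu$ is one of the listed frequencies $a,b,c$ and therefore $\langle\nu\rangle\le\langle a,b,c,Kw-L\nu\rangle$, one obtains $\theta_0\,\langle a,b,c,w\rangle \le \langle a,b,c,Kw-L\nu\rangle$ for a fixed $\theta_0\in(0,1]$ depending only on the implied constant in $L\lesssim K$. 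In particular $\theta_0$ is independent of $a,b,c,w,\nu$ and of $K,L$; I reuse the letter $\theta_0$ as in the statement of the lemma, with the understanding that it need not coincide with the analyticity radius of $\mu$.

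First, for $C_{k,\eta,p,0}$ I would apply this with $w=kt$ and $\nu=\eta$: since $\eta'=Kkt-L\eta$ this yields $\theta_0\langle k,\eta,p,kt\rangle\le\langle k,\eta,p,\eta'\rangle$, and because $0\le s\le t$ forces $s^{-\delta}-t^{-\delta}\ge0$, multiplying through the exponent preserves the direction of the inequality and gives the claimed bound on $C_{k,\eta,p,0}(s,t)$. Next, for $C_{k,\eta,p,1}$ — which does not involve $\eta'$ at all — I would apply the estimate with $w=k_1 s$ and $\nu=\eta_1$, so that $Kw-L\nu=-(L\eta_1-Kk_1 s)$ and $\langle k_1,\eta_1,p_1,Kw-L\nu\rangle=\langle k_1,\eta_1,p_1,L\eta_1-Kk_1 s\rangle$; since $p_1\le p$ gives $p_1^{-\delta}-p^{-\delta}\ge0$, the bound on $C_{k,\eta,p,1}(s,t)$ follows immediately.

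For $C_{k,\eta,p,2}$ the one extra step is an algebraic rewriting of the shifted argument: using $\eta=\eta_1+\eta_2$ and $\eta'=Kkt-L\eta$ one has $\eta'+L\eta_1-Kk_1 s = Kkt-L\eta_2-Kk_1 s = K(kt-k_1 s)-L\eta_2$, again of the form $Kw-L\nu$ with $w=kt-k_1 s$ and $\nu=\eta_2$. Applying the basic estimate gives $\theta_0\langle k_2,\eta_2,p_2,kt-k_1 s\rangle\le\langle k_2,\eta_2,p_2,\eta'+L\eta_1-Kk_1 s\rangle$, and since $p_2\le p$ the coefficient $p_2^{-\delta}-p^{-\delta}$ is nonnegative, which finishes the third bound.

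I do not expect a genuine obstacle here: the lemma is a bookkeeping device, and the only point deserving a word of care is the uniformity of $\theta_0$ — it must not degrade as $K,L\to\infty$ — which is immediate from $K\ge 1$ and $L\lesssim K$. Its role is to trade the ``worst-case'' Fourier variables $\eta'$, $L\eta_1-Kk_1 s$, $\eta'+L\eta_1-Kk_1 s$ for the physical quantities $kt$, $k_1 s$, $kt-k_1 s$, which are exactly the combinations that will appear — and cancel against the factor $t-s$ — when, in the proof of \eqref{claimC}, one integrates in $s$ and sums over $A_{k,\eta,p}$.
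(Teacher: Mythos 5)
Your proposal is correct and follows essentially the same route as the paper: both reduce each bound to the triangle inequality \eqref{tri-kt} (applied with $(k,\eta,t)$ replaced by the relevant triple, and using $K\ge1$, $L\lesssim K$ for a uniform $\theta_0$), together with the nonnegativity of $s^{-\delta}-t^{-\delta}$ and $p_i^{-\delta}-p^{-\delta}$. Your write-up is in fact slightly more explicit than the paper's, which treats only the $j=0$ factor in detail and dismisses the other two as "similar", omitting the rewriting $\eta'+L\eta_1-Kk_1s=K(kt-k_1s)-L\eta_2$ that you correctly supply.
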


\begin{proof}
Recalling the inequality \eqref{tri-kt}: 
$ 
| kt | \le |K kt - L \eta | +  |\eta|  ,
$
we have 
$$
\begin{aligned}
C_{k,\eta,p,1}(s,t) &= e^{-(s^{-\delta} - t^{-\delta}) \langle k,\eta,p, L \eta - Kkt  \rangle} 
\\&\le e^{-(s^{-\delta} - t^{-\delta}) \langle k,\eta  ,p \rangle/2} 
e^{-(s^{-\delta} - t^{-\delta}) \langle \eta, L \eta - Kkt  \rangle /2} 
\\&\le  e^{-\theta_0 (s^{-\delta} - t^{-\delta}) \langle k,\eta,p, kt \rangle} 
\end{aligned}$$
provided $\theta_0$ is small enough.
The bounds on $C_{k,\eta,p,1}(s,t) $ and $C_{k,\eta,p,2}(s,t) $ are similar. 
\end{proof}

Let us now prove the claim (\ref{claimC}).
To estimate the time integral, we consider two cases: $p_1\le p/2$ and $p_2\le p/2$. 

~\\
{\bf Case 1: $p_1\le p/2$.} In this case, we note that 
$$ 
p_1^{-\delta} - p^{-\delta} \ge \theta_\delta p_1^{-\delta} 
$$
for some positive constant $\theta_\delta$. This and the estimate from Lemma \ref{lemC12} yield
$$
C_{k,\eta,p,1}(s,t) \le  e^{-\theta_\delta p_1^{-\delta} \langle k_1,\eta_1,p_1,  k_1 s \rangle} .
$$
Let us further bound the exponent. Using the standard Young's inequality  
$ab \lesssim a^q + b^{q'}$, with $q = 1/(1-\delta)$ and $q' = q/(q-1)$, we have 
\beq \label{inconv}
|a|^{1-\delta}  = (a p_1^{-\delta})^{1-\delta} |p_1|^{\delta (1-\delta)} 
\le C_\delta \Big( |p_1|^{-\delta} |a| + |p_1|^{1-\delta} \Big)
\eeq
for some constant $C_\delta$. Using this with $a = \langle k_1,\eta_1,k_1 s \rangle$, we have 
$$
p_1^{-\delta} \langle k_1,\eta_1,p_1,  k_1 s \rangle \ge |p_1|^{1-\delta} 
+ |p_1|^{-\delta} \langle k_1,\eta_1,k_1 s \rangle \ge \frac{1}{C_\delta} \langle k_1,\eta_1,k_1 s \rangle^{1-\delta}.
$$
Clearly, we also have 
$ p_1^{-\delta} \langle k_1,\eta_1,p_1,  k_1 s \rangle \ge \langle p_1\rangle^{1-\delta} $, recalling $p_1 \in \NN^*$. This yields 
\beq \label{inconv2} 
p_1^{-\delta} \langle k_1,\eta_1,p_1,  k_1 s \rangle \ge \frac{1}{2C_\delta} \langle k_1,\eta_1,p_1,k_1 s \rangle^{1-\delta}.
\eeq
Therefore, 
\begin{equation}\label{est-Cp222}
C_{k,\eta,p,1}(s,t) \le  e^{-\theta_\delta p_1^{-\delta} \langle k_1,\eta_1,p_1,  k_1 s \rangle} \le   
e^{-\theta'_\delta \langle k_1,\eta_1,p_1 \rangle^{1-\delta}} 
e^{-\theta'_\delta \langle k_1 s \rangle^{1-\delta}} , 
\end{equation}
for some positive constant $\theta'_\delta$. 

On the other hand, we simply bound 
$$
C_{k,\eta,p,0}(s,t) \le e^{-\theta_0(s^{-\delta} - t^{-\delta}) \langle k,\eta,p, kt\rangle} 
\le e^{-\theta_0(s^{-\delta} - t^{-\delta}) \langle t\rangle} 
$$
noting $k \not =0$. We also bound $C_{k,\eta,p,2}(s,t) \le 1$. Inserting these estimates into \eqref{claimC}, we have 
$$
\begin{aligned}
&
\sum_{A_{k,\eta,p}} \int_{0}^t  (t-s) C_{k,\eta,p,0}(s,t) C_{k,\eta,p,1}(s,t) 
C_{k,\eta,p,2}(s,t) \langle s\rangle^{-\sigma}\; ds 
\\&\le \sum_{A_{k,\eta,p}} e^{-\theta'_\delta \langle k_1,\eta_1,p_1 \rangle^{1-\delta}} \int_{0}^t  (t-s) 
e^{-\theta_0(s^{-\delta} - t^{-\delta}) \langle t\rangle} 
e^{-\theta'_\delta \langle k_1 s \rangle^{1-\delta}}   \langle s\rangle^{-\sigma}\; ds 
\\&\lesssim \int_{0}^t  (t-s) 
e^{-\theta_0(s^{-\delta} - t^{-\delta}) \langle t\rangle} e^{-\theta'_\delta \langle s \rangle^{1-\delta}}  
 \langle s\rangle^{-\sigma}\; ds ,\end{aligned}
$$
in which we used $e^{-\theta'_\delta \langle k_1 s \rangle^{1-\delta}}  \le e^{-\theta'_\delta \langle s \rangle^{1-\delta}} $, 
since $k_1\not =0$. 
It remains to bound the time integral
$$
\int_{0}^t  (t-s) 
e^{-\theta_0(s^{-\delta} - t^{-\delta}) \langle t\rangle} e^{-\theta'_\delta \langle s \rangle^{1-\delta}}   
\langle s\rangle^{-\sigma}\; ds
\le C_0 \langle t\rangle^{-\sigma} .
$$ 
Indeed, the estimate is clear for $s\ge t/2$, using the exponential term 
$e^{-\theta'_\delta \langle s\rangle^{1-\delta}}$ in the integrand. 
On the other hand, for $s\le t/2$, we make use of the fact that  $s^{-\delta} - t^{-\delta} \ge \theta_\delta t^{-\delta}$, 
yielding again an exponential decaying term 
$$
e^{-(s^{-\delta} - t^{-\delta}) \langle t\rangle} \le  e^{-\theta_\delta \langle t\rangle^{1-\delta}} .$$
The claim \eqref{claimC} follows.

~\\
{\bf Case 2: $p_2\le p/2$.} Similarly, in this case, we use 
$$ p_2^{-\delta} - p^{-\delta} \ge \theta_\delta p_2^{-\delta} $$
for some positive constant $\theta_\delta$, which implies 
$$
C_{k,\eta,p,2}(s,t) \le  e^{-\theta_\delta p_2^{-\delta} \langle k_2,\eta_2,p_2, kt -  k_1 s \rangle} .
$$
Estimating the exponent exactly as done in \eqref{est-Cp222}, we thus obtain 
\begin{equation}\label{bd-CC2}
C_{k,\eta,p,2}(s,t) \le   e^{-\theta_\delta \langle k_2,\eta_2,p_2  \rangle^{1-\delta}}   
e^{-\theta_\delta \langle  kt - k_1 s \rangle^{1-\delta}} .
\end{equation}
In the case when $|kt - k_1 s|\ge t/2$, the above yields an exponential decay term in $(k_2,\eta_2,p_2,t)$. 
The claim \eqref{claimC} thus follows. 

It remains to consider the case when $|kt - k_1 s|\le t/2$.  
It suffices to treat the case $k> 0$, the other being similar. 
In this case, we note that $k_1>0$ and $s\in [k_1^{-1} (k-1/2)t , k_1^{-1}(k+1/2) t] $. 
In particular, as $s<t$, we have $k_1\ge k$. We treat two cases $k_1 = k$ and $k_1>k$, separately.

Consider first the case when $k_1 = k \not =0$. We then have 
$$
C_{k,\eta,p,2}(s,t) \le  e^{-\theta_\delta \langle k_2,\eta_2,p_2,  k(t-s) \rangle^{1-\delta}} 
\le  e^{-\theta_\delta \langle k_2,\eta_2,p_2\rangle^{1-\delta}} e^{-\theta_\delta \langle t-s\rangle^{1-\delta}},
$$
while we simply bound $C_{k,\eta,p,0}(s,t) \le 1$ and $C_{k,\eta,p,1}(s,t) \le 1$. Let us now check the claim \eqref{claimC} for this case. We have 
$$
\begin{aligned}
& 
\sum_{A_{k,\eta,p}} \int_{0}^t (t-s) C_{k,\eta,p,0}(s,t)C_{k,\eta,p,1}(s,t) 
C_{k,\eta,p,2}(s,t) \langle s\rangle^{-\sigma}\; ds
\\& \le \sum_{A_{k,\eta,p}} e^{-\theta_\delta \langle k_2,\eta_2,p_2\rangle^{1-\delta}} 
\int_{0}^t (t-s) e^{-\theta_\delta \langle t-s\rangle^{1-\delta}} \langle s\rangle^{-\sigma}\; ds
\\&\lesssim \int_{0}^t e^{-\frac12 \theta_\delta \langle t-s\rangle^{1-\delta}} \langle s\rangle^{-\sigma}\; ds
,\end{aligned}
$$
which is clearly bounded by $C_0 \langle t\rangle^{-\sigma}$.
 
Next, we consider the case when $k_1>k>0$. In this case, recalling \eqref{bd-CC2}, we have
$$
C_{k,\eta,p,2}(s,t) \le   e^{-\theta_\delta \langle k_2,\eta_2,p_2  \rangle^{1-\delta}} ,
$$
while we use the following bound on $C_{k,\eta,p,0}(s,t)$: 
$$C_{k,\eta,p,0}(s,t) \le e^{-\theta_0(s^{-\delta} - t^{-\delta}) \langle kt\rangle} .$$
Since $s\in [k_1^{-1} (k-1/2)t , k_1^{-1}(k+1/2) t] $ and $k_1>k>0$, we bound 
$$
s^{-\delta} - t^{-\delta} \ge  \Big( \frac{k_1^\delta}{(k+1/2)^\delta} - 1 \Big) \frac{1}{t^\delta} \ge \theta_\delta t^{-\delta}|k|^{-1}
$$ 
for some positive constant $\theta_\delta$ independent of $k,k_1$. 
This proves 
$$
C_{k,\eta,p,0}(s,t)\le e^{-\theta_\delta \langle t\rangle^{1-\delta}}. 
$$
We also bound $C_{k,\eta,p,1}(s,t)\le 1$. Combing the estimates into \eqref{claimC}, we thus have 
$$
\begin{aligned}
& 
\sum_{A_{k,\eta,p}} \int_{0}^t (t-s) C_{k,\eta,p,0}(s,t)C_{k,\eta,p,1}(s,t) 
C_{k,\eta,p,2}(s,t) \langle s\rangle^{-\sigma}\; ds
\\& \le \sum_{A_{k,\eta,p}} e^{-\theta_\delta \langle k_2,\eta_2,p_2\rangle^{1-\delta}} 
\int_{0}^t (t-s) e^{-\theta_\delta \langle t\rangle^{1-\delta}} \langle s\rangle^{-\sigma}\; ds
\end{aligned}
$$
which is again bounded by $C_0 \langle t\rangle^{-\sigma}$. The claim \eqref{claimC} follows. 
\end{proof}

\subsection{Estimates on $\Ff_{k,\eta,p}$}

In this section, we prove the estimates \eqref{est-Fp} on $\Ff_{k,\eta,p}$:
\begin{equation}
\label{re-estf}
|\Ff_{k,\eta,p} (t,\eta')| \le C_0^p  e^{-\lambda_p(t) \langle k,\eta,p, \eta' \rangle} \langle k\rangle^{-1}
\end{equation}
assuming that the estimates \eqref{est-Fp} on $\Ff_{k,\eta,p_1}$ hold for all $p_1\le p-1$ and the estimates \eqref{est-Ep} on and $\FE_{k,\eta,p_1}$ hold for all $p_1\le p$. This will end the proof of Proposition \ref{prop-fkep}. By Lemma \ref{lem-linearFkp}, we have 
$$
\Ff_{k,\eta,p}(t,\eta') = \FS_{k,\eta,p}(t,\eta') - \int_0^t \FE_{k,\eta,p}(s) \widehat{\partial_v \mu}(\eta'+ L\eta - Kk s) \; ds .$$
Using \eqref{est-Ep} and the analyticity assumption on $\mu(v)$, we get 
$$
\begin{aligned}
&\int_0^t |\FE_{k,\eta,p}(s) \widehat{\partial_v \mu}(\eta'+ L\eta - Kk s)| \; ds 
\\& \le C_0^{p} \langle Kk\rangle^{-1}\int_0^t e^{-\lambda_{p}(s) \langle  k,\eta,p,L\eta - K k s\rangle} e^{-\theta_0 | \eta' + L\eta - Kk s |}\langle s \rangle^{-\sigma} \; ds .
\\& \le C_0^{p} \langle Kk\rangle^{-1} e^{-\lambda_{p}(t) \langle  k,\eta,p,\eta' \rangle} \int_0^t \langle s \rangle^{-\sigma} \; ds
\\& \le C_0^{p} \langle k\rangle^{-1} e^{-\lambda_{p}(t) \langle  k,\eta,p,\eta'  \rangle}
\end{aligned}$$ 
in which we used that $\lambda(t) \le \lambda(s)\le \theta_0$.

It remains to give estimates on $\FS_{k,\eta,p}(t,\eta')$. Recall from \eqref{est-source-eta} that 
$$\begin{aligned}
|\FS_{k,\eta,p}(t,\eta')| &\le C_0^pe^{-\lambda_p(t) \langle k,\eta,p, \eta'  \rangle}\langle K k\rangle^{-1} \sum_{A_{k,\eta,p}}  \int_0^t  |\eta' + L\eta - Kk s|
\\&\quad \times  C_{k,\eta,p,\eta',0}(s,t) C_{k,\eta,p,\eta',1}(s,t)C_{k,\eta,p,\eta',2}(s,t) \langle s \rangle^{-\sigma} \; ds,
\end{aligned}
$$
where the factors $C_{k,\eta,p,\eta',j}(s,t)$ are defined as in \eqref{def-CCC}. Since $K\ge 1$ and $L\lesssim K$, we have 
$$K^{-1}| \eta' + L \eta   - K k s)| \le |\eta' | +  |\eta| + |ks| \lesssim \langle s\rangle \langle k,\eta,p,\eta' \rangle. $$

The claim \eqref{re-estf} will follow from the following estimates, which we will now prove
\begin{equation}\label{claim-sumC} \sum_{A_{k,\eta,p}} C_{k,\eta,p,\eta',1}(s,t)C_{k,\eta,p,\eta',2}(s,t) \lesssim 1,
\end{equation}
and 
\begin{equation}\label{time-shrink}
\int_{0}^t e^{-(s^{-\delta} - t^{-\delta})\langle k,\eta,p, \eta' \rangle} \langle k,\eta,p, \eta' \rangle
\langle s\rangle^{-\sigma+1} \; ds \lesssim 1, 
\end{equation}
uniformly in $k,\eta,p, \eta'$, and $t$. 

Let us start with \eqref{claim-sumC}. As argued above, we have, for $p_1\le p/2$, 
$$ 
\begin{aligned}
C_{k,\eta,p,\eta',1}(s,t)
&\le e^{-\theta_\delta p_1^{-\delta}\langle k_1,\eta_1,p_1, L\eta_1 - K k_1 s \rangle} 
\le e^{-\theta_\delta \langle k_1,\eta_1,p_1\rangle^{1-\delta}},
\end{aligned}
$$
Similarly, for $p_2\le p/2$, we have 
$$
\begin{aligned}
C_{k,\eta,p,\eta',2}(s,t) &\le e^{-\theta_\delta p_2^{-\delta}\langle k_2,\eta_2,p_2,\eta' - L\eta_1 + K k_1 s\rangle}
\le e^{-\theta_\delta \langle k_2,\eta_2,p_2\rangle^{1-\delta}} .
\end{aligned}
$$
In both cases, the claim \eqref{claim-sumC} follows in view of the definition of $A_{k,\eta,p}$. 
Finally, we check \eqref{time-shrink}. 
We have $\langle s\rangle^{-\sigma + 1}\lesssim |\frac d{ds} s^{-\delta}|$. Therefore, 
$$
\begin{aligned}
&\int_{0}^te^{-(s^{-\delta} - t^{-\delta}) 
\langle k,\eta,p, \eta' \rangle}\langle k,\eta,p, \eta' \rangle\langle s\rangle^{-\sigma+1} \; ds
\\
&\lesssim \int_{0}^t e^{-(s^{-\delta} - t^{-\delta}) \langle k,\eta,p, \eta' \rangle}  \langle k,\eta,p,\eta' \rangle |\frac d{ds} s^{-\delta}| \; ds 
\end{aligned}
$$
which is bounded. This ends the proof of Proposition \ref{prop-fkep}. 


\end{document}